\newcommand{\executeiffilenewer}[3]{%
\ifnum\pdfstrcmp{\pdffilemoddate{#1}}%
{\pdffilemoddate{#2}}>0%
{\immediate\write18{#3}}\fi%
}
\newcommand{%
\executeiffilenewer{.svg}{.pdf}%
{inkscape -z -D --file=.svg %
--export-pdf=.pdf --export-latex}%
\input{.pdf_tex}%
}[1]{%
\executeiffilenewer{#1.svg}{#1.pdf}%
{inkscape -z -D --file=#1.svg %
--export-pdf=#1.pdf --export-latex}%
\input{#1.pdf_tex}%
}
\theoremstyle{plain}
\newtheorem{theorem}{Theorem}[section]
\newtheorem{lemma}[theorem]{Lemma}
\newtheorem{proposition}[theorem]{Proposition}
\newtheorem{definition}[theorem]{Definition}
\newtheorem{quest}[theorem]{Question}
\DeclareMathOperator{\Cay}{Cay}
\DeclareMathOperator{\diag}{diag}
\DeclareMathOperator{\dist}{dist}
\DeclareMathOperator{\End}{End}
\DeclareMathOperator{\tr}{tr}
\DeclareMathOperator{\som}{sum}
\DeclareMathOperator{\spec}{sp}
\def\spaan{\mathop{\rm span }\nolimits}
\def\e{\mbox{\boldmath $e$}}
\def\j{\mbox{\boldmath $j$}}
\def\x{\mbox{\boldmath $x$}}
\def\alpha{\mbox{\boldmath $\alpha$}}
\def\nu{\mbox{\boldmath $\nu$}}
\def\rho{\mbox{\boldmath $\rho$}}
\def\xi{\mbox{\boldmath $\xi$}}
\def\0{\mbox{\boldmath $0$}}
\def\00{\mbox{\boldmath $O$}}
\def\A{\mbox{\boldmath $A$}}
\def\B{\mbox{\boldmath $B$}}
\def\D{\mbox{\boldmath $D$}}
\def\I{\mbox{\boldmath $I$}}
\def\M{\mbox{\boldmath $M$}}
\def\N{\mbox{\boldmath $N$}}
\def\S{\mbox{\boldmath $S$}}
\def\T{\mbox{\boldmath $T$}}
\def\X{\mbox{\boldmath $X$}}
\def\Y{\mbox{\boldmath $Y$}}
\def\G{\Gamma}
\def\Re{\mathbb R}
\def\Z{\mathbb Z}
\begin{document}
\title{Distance mean-regular graphs}
 \author{V. Diego$^a$ and M.A. Fiol$^{a,b}$
\\ \\
{\small $^a$Universitat Polit\`ecnica de Catalunya} \\
{\small Dept. de Matem\`atica Aplicada IV, Barcelona, Catalonia}\\
{\small $^b$Barcelona Graduate School of Mathematics}\\
{\small E-mails: {\tt \{victor.diego,fiol\}@ma4.upc.edu}} \\
 }

\maketitle

\begin{abstract}
We introduce the concept of distance mean-regular graph, which can be seen as a generalization of both vertex-transitive and distance-regular graphs. Let $\G$ be a graph with vertex set $V$, diameter $D$, adjacency matrix $\A$, and adjacency algebra ${\cal A}$. Then, $\G$ is {\em distance mean-regular} when, for a given $u\in V$, the averages of the intersection numbers
$p_{ij}^h(u,v)=|\G_i(u)\cap \G_j(v)|$ (number of vertices at distance $i$ from $u$ and distance $j$ from $v$)
computed over all vertices $v$ at a given distance $h\in \{0,1,\ldots,D\}$ from $u$, do not depend on $u$. In this work we study some properties and characterizations of these graphs. For instance, it is shown that a distance mean-regular graph is always distance degree-regular, and we give a condition for the converse to be also true.
Some algebraic and spectral properties of distance mean-regular graphs are also investigated. We show that, for distance mean regular-graphs, the role of the distance matrices of distance-regular graphs is played for the so-called  distance mean-regular matrices. These matrices are computed from a sequence of orthogonal polynomials evaluated at the adjacency matrix of $\G$ and, hence, they generate a subalgebra of ${\cal A}$. Some other algebras associated to distance mean-regular graphs are also characterized.
\end{abstract}


\section{Preliminaries}
Distance-regular graphs are a key concept in combinatorics, because of their rich structure and multiple applications. Indeed, they have important connections with other branches of mathematics, as well as other areas of graph theory.
For background on distance-regular graphs, we refer the reader to Bannai and Ito \cite{bi93}, Bigss \cite{b93}, Brouwer, Cohen, and Neumaier \cite{bcn89}, Brouwer and Haemers \cite{bh12},  Godsil \cite{g93}, and Van Damm, Koolen and Tanaka \cite{dkt12}.


Since their introduction by Biggs \cite{b71} in the early 70's, most of generalizations proposed of distance-regular graphs  are basically intended for regular graphs. For instance, Weichel \cite{w82} introduced the so-called {\em distance-polynomial graphs}, as those having their distance matrices in the adjacency algebra of the graph. Another example is the {\em distance-degree regular} or {\em super-regular} graphs, proposed by Hilano and Nomura \cite{hn84}, and characterized by the independence of the number of vertices at a given distance from every vertex.


\subsection{Graphs and their spectra}
Before discussing our approach and related results, we begin by recalling some basic notation and  background.
Let $\G=(V,E)$ be a finite, simple, and connected graph
with vertex set $V$, order $n=|V|$,
and diameter $D$. The distance between vertices $u$ and $v$ is denoted by $\dist(u,v)$. The set
of vertices at distance $i$ from a given vertex $u\in V$ is $\G_i(u)=\{v:\dist(u,v)=i\}$, for $i=0,\ldots,D$. If the cardinality $k_i(u)=|\G_i(u)|$ does not depend on $u$, we write it by $k_i$. Thus, $\G$ is {\em super-regular} \cite{hn84} if this happens for every $i=0,\ldots,D$. In particular, if the graph is regular, we denote its degree by $k=k_1$.
The {\em distance-$i$ graph} $\G_i$ has the same vertex set as $\G$, and two vertices are adjacent if and only if they are at distance $i$ in $\G$.
If $\G$ has adjacency matrix $\A$,  we denote
the spectrum of $\G$  by
$$
\spec \G = \spec \A = \{\lambda_0^{m_0},\ldots,\lambda_d^{m_d}\},
$$
where the different eigenvalues of $\G$ are in decreasing order,
$\lambda_0>\cdots >\lambda_d$, and the superscripts
stand for their multiplicities $m_i=m(\lambda_i)$.
The adjacency matrix of $\G_i$ is referred to as the {distance-$i$ matrix} $\A_i$. Thus, $\A_0=\I$ and $\A_1=\A$.

\subsection{Interlacing and equitable partitions}
\label{interlacing}
The following basic results about interlacing and equitable partitions can be found in
Haemers \cite{h95}, Fiol \cite{f99}, or Brouwer and Haemers \cite{bh12}.
Given a graph $\G$ on $n$ vertices, and with eigenvalues $\theta_1\ge \theta_2\ge \cdots \ge \theta_n$,
let ${\cal P}=\{U_1,\ldots,U_m\}$ be a partition of its vertex set $V$.  Let $\T$ be the
characteristic matrix of $\cal P$, which columns are
the characteristic vectors of $U_1,\ldots,U_m$, and consider the matrix $\S=\T\D^{-1}$ where
$\D=\diag(|U_1|,\ldots,|U_m|)=\T^\top\T$, satisfying $\S^{\top}\T=\I$. Then the so-called
{\em quotient matrix} of $\A$ with respect to ${\cal P}$, is
\begin{equation}
\label{quotient-matrix}
\B=\S^{\top}\A\T=\D^{-1}\T^{\top}\A\T,
\end{equation}
and its element $b_{ij}$ equals the average row
sum of the block $\A_{i,j}$ of $\A$ with rows and columns
indexed by $U_i$ and $U_j$, respectively.
Moreover, the  eigenvalues $\mu_1\ge \mu_2\ge \cdots\ge \mu_m$ of $\B$ interlace those of $\A$, that is,
\begin{equation}
\label{ineq:interlacing}
\theta_i\ge \mu_i\ge \theta_{n-m+i},\qquad i=1,\ldots, m.
\end{equation}
Moreover, if the interlacing is tight, that is, there exists some $k$, $0\le k\le m$, such that
$\theta_i=\mu_i$, $i=1,\ldots,k$, and $\mu_i=\theta_{n-m+i}$, $i=k+1,\ldots, m$, then ${\cal P}$ is an {\em equitable} (or {\em regular}) partition of $\A$, that is, each
block of the partition has constant row and column sums. In
the graph $\G$, this means that the bipartite induced subgraph
$\G[U_i,U_j]$ is biregular for each $i\neq j$, and that the
induced subgraph $\G[U_i]$ is regular for each
$i\in\{1,\ldots,m\}$.

\section{Distance mean-regular graphs}
In this section we introduce the concept of distance mean-regular graph and give some of their basic properties.
The reader will realize soon that most of such properties are similar to those of distance-regular graphs.

\subsection{Definitions and examples}
First, recall that a graph $\G=(V,E)$ with diameter $D$ is {\it distance-regular} if, for any two vertices $u,v\in V$ at distance $h=\dist(u,v)$, the numbers ({\em intersection parameters})
$$
p_{ij}^h(u,v)=|\G_i(u)\cap \G_j(v)|,\qquad h,i,j=0,\ldots,D,
$$
only depend on $h,i,j$.

Inspired by this definition, we consider the following generalization:

\begin{definition}
Given a graph $\G$ with a vertex $u\in V$ we consider the averages
$$
\overline{p}_{ij}^h(u)=\frac{1}{|\G_h(u)|}\sum_{v\in \G_h(u)}|\G_i(u)\cap \G_j(v)|,\qquad h,i,j=0,\ldots,D.
$$
If these  numbers do not depend on the vertex $u$, but only on the integers $h,i,j$, we say that $\G$ is {\em distance mean-regular} with parameters
$\overline{p}_{ij}^h$.
\end{definition}

As in the case of distance-regular graphs,  when $i=1$ we use the abbreviated   notations
$\overline{a}_h=\overline{p}_{1h}^h$, $\overline{b}_h=\overline{p}_{1,h+1}^h$, and $\overline{c}_h=\overline{p}_{1,h-1}^h$. Moreover, in due course we shall show that, under a general condition, the invariance of these {\em intersection numbers} suffices for having distance mean-regularity.

The {\em intersection mean-matrix} $\overline{\B}$ of $\G$, with entries $(\overline{\B})_{hj}=\overline{p}_{1j}^h$, has the tridiagonal form
$$
\overline{\B}= \left(\begin{array}
{cccccc}
\overline{a}_0 & \overline{b}_0   &                &                    &                    &                \\
\overline{c}_1 & \overline{a}_1   & \overline{b}_1 &                    &                    &                \\
               & \overline{c}_2   & \overline{a}_2 & \overline{b}_2     &                    &                \\
               &                  &                &   \ddots           &                    &               \\
               &                  &                & \overline{c}_{d-1} & \overline{a}_{d-1} & \overline{b}_{d-1}\\
               &                  &                &                    & \overline{c}_d     &  \overline{a}_d
\end{array}\right),
$$
(notice that $\overline{a}_0=0$, and $\overline{p}_{1j}^h=0$ for $j\neq h-1,h,h+1$), with corresponding intersection mean-diagram of Fig.\ref{intersec-mean-diagram} (where $\omega_{ij}$ denotes the number of edges between vertex sets $\G_i(u)$ and $\G_j(u)$).
As the row sums of $\overline{\B}$ are equal to the degree of $\G$ (see Lemma \ref{basic-properties}$(i)$), the same information can be represented by the {\em intersection mean-array}, which is
$$
\iota(\G)=\{
\overline{b}_0, \overline{b}_1,\ldots,\overline{b}_{d-1}; \overline{c}_1,\overline{c}_2,\ldots,\overline{c}_d
\}.
$$


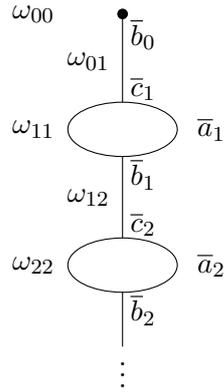
\begin{figure}[h!]
\begin{center}
\begin{tikzpicture}[scale=0.9]

\node  at (-1.3,-0.3) {$\omega_{00}$};
\draw[fill] (0,-0.3) circle (2pt);
\node  at (0.3,-0.6) {$\overline{b}_0$};

\draw (0,-0.3) --(0,-1.6);
\node at (-0.5,-1) {$\omega_{01}$};

\node  at (0.3,-1.4) {$\overline{c}_1$};
\node  at (-1.3,-2) {$\omega_{11}$};
\draw[rotate=0] (0,-2) ellipse (0.8 and 0.4);
\node  at (1.3,-2) {$\overline{a}_1$};
\node  at (0.3,-2.66) {$\overline{b}_1$};

\draw (0,-2.4) --(0,-3.6);
\node at (-0.5,-3) {$\omega_{12}$};

\node  at (0.3,-3.4) {$\overline{c}_2$};
\node  at (-1.3,-4) {$\omega_{22}$};
\draw[rotate=0] (0,-4) ellipse (0.8 and 0.4);
\node  at (1.3,-4) {$\overline{a}_2$};
\node  at (0.3,-4.66) {$\overline{b}_2$};

\draw (0,-4.4) --(0,-5.2);
\node  at (0,-5.5) {$\vdots$};

\end{tikzpicture}
\end{center}
\vskip -.8cm
\caption{Mean-intersection diagram.}
\label{intersec-mean-diagram}
\end{figure}

%

Some instances of distance mean-regular graphs are the
distance-regular graphs, and the vertex-transitive graphs.
Thus, a first example of distance mean-regular (vertex-transitive) graph is the circulant (or Cayley graph) $\G=\Cay(\Z_8;\pm 4)$ shown in Fig. \ref{fig1}.
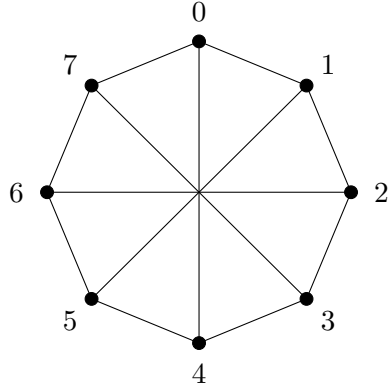
\begin{figure}[h]
\begin{center}
\begin{tikzpicture}[scale=0.4]
\foreach \i in {0,...,7}
{
\path (45*\i:5) coordinate (P\i);
\path (45*\i:6) coordinate (Q\i);
\draw[fill] (P\i) circle (6pt);
}
\node  at (Q0) {$2$};
\node  at (Q1) {$1$};
\node  at (Q2) {$0$};
\node  at (Q3) {$7$};
\node  at (Q4) {$6$};
\node  at (Q5) {$5$};
\node  at (Q6) {$4$};
\node  at (Q7) {$3$};
\foreach \i / \j  in {0/1,1/2,2/3,3/4,4/5,5/6,6/7,7/0}
{
 \draw (P\i) --(P\j);
}
\foreach \i / \j  in {0/4,1/5,2/6,3/7}
{
 \draw (P\i) --(P\j);
}
\end{tikzpicture}
\end{center}
\caption{A vertex-transitive distance mean-regular graph.}
\label{fig1}
\end{figure}
This graph has
diameter $D=2$, and intersection mean-matrix
$$
\overline{\B}=\left(\begin{array}{ccc}
  \overline{a}_0&\overline{b}_0&0\\
  \overline{c}_1&\overline{a}_1&\overline{b}_1\\
  0&\overline{c}_2&\overline{a}_2\\
\end{array}\right)
=
 \left(\begin{array}{ccc}
  0&3&0\\
  1&0&2\\
  0&\frac{3}{2}&\frac{3}{2}\\
\end{array}\right).
$$

A second example of distance mean-regular, but not vertex-transitive, graph $\G$
is shown in Fig. \ref{fig2}.
\begin{figure}[h]
\center
  \includegraphics[scale=0.7]{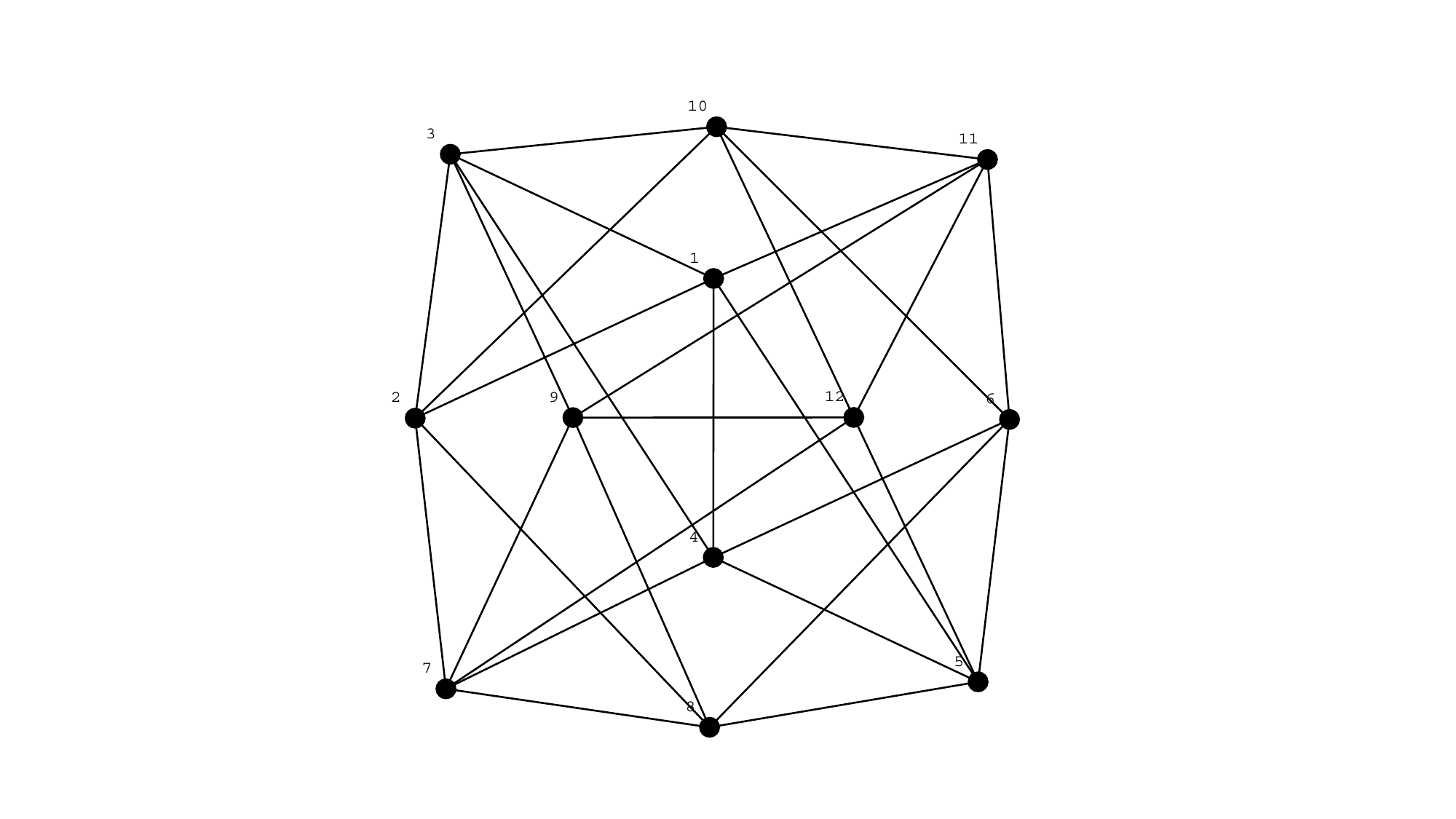}
  \caption{A non-vertex transitive distance mean-regular graph.}
  \label{fig2}
\end{figure}
Now, $\G$ has 
again diameter $D=2$, and intersection mean-matrix
$$
\overline{\B}=\left(\begin{array}{ccc}
  \overline{a}_0&\overline{b}_0&0\\
  \overline{c}_1&\overline{a}_1&\overline{b}_1\\
  0&\overline{c}_2&\overline{a}_2\\
\end{array}\right)
  =
  \left(\begin{array}{ccc}
  0&5&0\\
  1&\frac{6}{5}&\frac{14}{5}\\
  0&\frac{14}{6}&\frac{16}{6}\\
\end{array}\right)
$$
In fact, we can easily check that $\G$ is neither vertex-transitive nor distance-regular by comparing the subgraphs induced by $\G_1(1)$  (a $4$-path and a singleton), and $\G_1(2)$ (a $2$-path and a $3$-path).


\subsection{A first characterization}
From the results in Subsection \ref{interlacing}, it is clear that the intersection mean-matrix $\overline{\B}$ of a distance mean-regular graph $\G$ is, in fact, the quotient matrix of the distance partition with respect to any vertex, computed as in \eqref{quotient-matrix}.
This leads us to the following result.

\begin{proposition}
A graph $\G$ is distance mean-regular if and only if the quotient matrix $\B$, evaluated in \eqref{quotient-matrix},  with respect to the distance partition of every vertex, is the same and, in this case, $\B$ turns out to be the intersection mean-matrix $\overline{\B}$ of $\G$. Moreover, $\G$ is distance-regular if and only if the interlacing is tight.
\end{proposition}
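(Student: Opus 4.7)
The plan is to compute the quotient matrix $\B(u)$ of the distance partition $\{\G_0(u), \G_1(u), \ldots, \G_D(u)\}$ directly from \eqref{quotient-matrix}, identify its entries with mean intersection numbers, and then read off both equivalences from the interlacing framework of Subsection \ref{interlacing}. Concretely, I would expand the block of $\A$ with rows indexed by $\G_h(u)$ and columns indexed by $\G_j(u)$: for $v\in \G_h(u)$ the row sum of that block is $|\G_1(v)\cap \G_j(u)|$, so by \eqref{quotient-matrix}
$$
(\B(u))_{hj} \;=\; \frac{1}{|\G_h(u)|}\sum_{v\in \G_h(u)}|\G_1(v)\cap \G_j(u)|,
$$
which is precisely the mean intersection number placed in the $(h,j)$-slot of $\overline{\B}$. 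The triangle inequality forces this entry to vanish whenever $|h-j|>1$, recovering the tridiagonal shape displayed earlier.

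The forward direction of the first equivalence is then immediate: if $\G$ is distance mean-regular, every $\overline{p}_{ij}^h(u)$ is independent of $u$, so in particular are the entries of $\B(u)$, and $\B(u)=\overline{\B}$ for every $u$. For the converse, the same identification shows that constancy of $\B(u)$ in $u$ yields constancy of the tridiagonal parameters $\overline{b}_h,\overline{a}_h,\overline{c}_h$; promoting this to the invariance of every $\overline{p}_{ij}^h$ (and not only those on the $i=1$ row) I would handle by appealing to the polynomial-in-$\A$ machinery the paper develops subsequently, in which the distance matrices $\A_i$ are produced as orthogonal polynomials in $\A$, so that every averaged intersection number is forced to equal a quantity already encoded in $\overline{\B}$.

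For the distance-regular assertion, I would invoke the tight-interlacing criterion recalled in Subsection \ref{interlacing}: tightness is equivalent to the distance partition being equitable, which by definition upgrades ``constant average row sums'' to ``constant row sums vertex by vertex''. Combined with the first part of the proposition, this forces $|\G_1(v)\cap \G_j(u)|$ to equal $(\overline{\B})_{hj}$ for every pair $u,v$ with $\dist(u,v)=h$ and every $j$; the standard three-term recursion on the intersection array then propagates this into the constancy of every $p_{ij}^h$, which is exactly distance-regularity. The converse is immediate, since a distance-regular graph has equitable distance partition by definition. The main obstacle is the converse direction of the first equivalence, where a single tridiagonal matrix $\overline{\B}$ must be shown to control all the $\overline{p}_{ij}^h$; deferring this to the later algebraic/polynomial framework is the cleanest route, although one can alternatively argue directly by re-expressing each $\overline{p}_{ij}^h$ in terms of products involving the distance matrices whose action on the partition is already fixed by $\overline{\B}$.
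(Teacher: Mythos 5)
Your identification of the quotient-matrix entries with the mean parameters and the forward implication agree with what the paper treats as immediate (strictly, the block row sums compute $\overline{p}_{j1}^h$ rather than $\overline{p}_{1j}^h$; the two coincide by the symmetry proved later in Proposition \ref{propo-dist-mat}, a point the paper also glosses over). For the sufficiency of the second statement your route is genuinely different from the paper's: you combine ``tight $\Rightarrow$ equitable'' with the hypothesis that all vertices share the same quotient matrix $\overline{\B}$, so that $|\G_1(v)\cap \G_j(u)|=(\overline{\B})_{hj}$ for every pair at distance $h$, which is already the definition of distance-regularity. The paper instead uses only equitability at each vertex plus regularity and invokes the Godsil--Shawe-Taylor theorem on distance-regularised graphs; your version avoids that external result by exploiting the common quotient matrix, and for this direction it is sound.

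Two steps, however, do not hold up. First, you claim tightness of the interlacing is \emph{equivalent} to the distance partition being equitable. The result recalled in Subsection \ref{interlacing} is one-directional (tight $\Rightarrow$ equitable); the converse is false, so your proof of the ``only if'' part of the second statement (distance-regular $\Rightarrow$ tight) is unsupported. The paper does not prove this direction either (``we only need to prove the sufficiency''), and it genuinely needs care: for a distance-regular graph with $D=d\ge 3$ and both $m_1\ge 2$ and $m_d\ge 2$ (e.g.\ the line graph of the Petersen graph, with spectrum $\{4,2^5,(-1)^4,(-2)^5\}$ and quotient eigenvalues $4,2,-1,-2$), the quotient spectrum cannot be split into the $k$ largest and $D+1-k$ smallest eigenvalues of $\A$, so the interlacing is not tight. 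Second, for the converse of the first equivalence you defer the invariance of the parameters with $i\ge 2$ to ``the distance matrices $\A_i$ being orthogonal polynomials in $\A$.'' That machinery is not available here: the polynomials evaluated at $\A$ give the distance mean-matrices $\overline{\A}_i=\overline{p}_i(\A)$, which in general differ from the distance matrices $\A_i$ (having $\A_i\in{\cal A}$ is distance-polynomiality, not a consequence of distance mean-regularity), and the paper itself warns that invariance of $\overline{a}_i,\overline{b}_i,\overline{c}_i$ gives full distance mean-regularity only ``under a general condition.'' So this reduction would fail as written; the paper's own proof simply takes the first equivalence as immediate from the identification of the quotient entries with the defining parameters, and does not attempt the promotion you propose.
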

\begin{proof}
We only need to prove the sufficiency of the second statement. If the interlacing is tight, the distance partition of every vertex is equitable. Moreover, the graph is clearly regular. Then, distance-regularity follows from the result of Godsil and Shawe-Taylor \cite{gs87} (see also \cite{f12}).
\end{proof}

\subsection{The intersection mean-parameters}
Let $\G$ have diameter $D$. For $i=0,\ldots, D$, let $\overline{\B}_i$ be the {\em proper intersection-$i$ mean-matrix} with entries $(\overline{\B}_i)_{hj}=\overline{p}_{i,j}^h$. In particular, notice that $\overline{\B}_0=\I$, and $\overline{\B}_1=\overline{\B}$. In general, $\overline{\B}_i$ can be easily computed in the following way.

\begin{lemma}
Let $\S$ and $\T$ be the matrices corresponding to a distance partition with respect to a given vertex of a distance mean-regular graph. Then, its proper intersection-$i$ mean-matrix can be computed form its distance-$i$ matrix by the formula
\begin{equation}
\label{Ai->Bi}
\overline{\B}_i=\S^{\top}\A_i \T,\qquad i=0,\ldots,D.
\end{equation}
\end{lemma}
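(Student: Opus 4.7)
The plan is to interpret $\S^{\top}\A_i\T$ as the quotient matrix of $\A_i$ with respect to the distance partition $\{\G_0(u),\ldots,\G_D(u)\}$ centered at the vertex $u$ used to build $\S$ and $\T$. In this way the argument is a direct extension of the $i=1$ calculation implicit in the preceding proposition, and the work reduces to identifying the $(h,j)$-entry of the resulting quotient matrix with $\overline{p}^{\,h}_{ij}$.

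First I would unpack the matrix product. Since $\S=\T\D^{-1}$ with $\D=\diag(k_0,\ldots,k_D)$ and the columns of $\T$ are the characteristic vectors of $\G_0(u),\ldots,\G_D(u)$, a short bookkeeping yields
\[
(\S^{\top}\A_i\T)_{hj}\;=\;\frac{1}{|\G_h(u)|}\sum_{v\in\G_h(u)}|\{w\in\G_j(u):\dist(v,w)=i\}|\;=\;\frac{1}{|\G_h(u)|}\sum_{v\in\G_h(u)}|\G_j(u)\cap\G_i(v)|,
\]
which, read against the definition of the averaged intersection numbers, is $\overline{p}^{\,h}_{j,i}(u)$.

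Finally, to bring the indices into the order required by $(\overline{\B}_i)_{hj}=\overline{p}^{\,h}_{i,j}$, I would appeal to the symmetry $\overline{p}^{\,h}_{ij}=\overline{p}^{\,h}_{ji}$. This is the mean counterpart of the classical distance-regular identity $p^{h}_{ij}=p^{h}_{ji}$, and in the mean-regular setting it follows from a brief double count: summing $k_h\,\overline{p}^{\,h}_{ij}(u)$ over all $u\in V$ (which is legitimate because both $k_h$ and $\overline{p}^{\,h}_{ij}$ are independent of $u$) reinterprets the total as the number of ordered triples $(u,v,w)$ with $\dist(u,v)=h$, $\dist(u,w)=i$, $\dist(v,w)=j$, and the involution $(u,v,w)\mapsto(v,u,w)$ maps this set bijectively onto the analogous set of triples for $\overline{p}^{\,h}_{ji}$. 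This symmetry step is the only non-cosmetic point in the proof; once it is recorded, chaining the identities gives $(\S^{\top}\A_i\T)_{hj}=\overline{p}^{\,h}_{j,i}=\overline{p}^{\,h}_{i,j}=(\overline{\B}_i)_{hj}$ for every $h,j$, which is the asserted identity \eqref{Ai->Bi}.
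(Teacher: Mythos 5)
Your proof is correct and takes essentially the same route as the paper's: the same entrywise expansion of $\S^{\top}\A_i\T$ yielding $\overline{p}^{\,h}_{ji}$, followed by the symmetry $\overline{p}^{\,h}_{ij}=\overline{p}^{\,h}_{ji}$. The only (minor) difference is that the paper defers that symmetry to Proposition \ref{propo-dist-mat}, where it is obtained by summing the entries of $\A_i\A_j\circ\A_h$ in two ways, whereas you prove it inline via the triple-counting involution $(u,v,w)\mapsto(v,u,w)$ --- the same double count in purely combinatorial form, so your argument is a valid, self-contained rendering of the paper's proof.
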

\begin{proof}
Let $U_i=\G_i(u)$, with $k_i=|\G_i(u)|$, $i=0,\ldots,D$, be the distance partition with respect to vertex $u$. Then, the matrices $\T$ and $\S$ have entries
$$
(\T)_{wi}=\left\{
\begin{array}{ll}
1 & \mbox{if $w\in U_i$}, \\
0 & \mbox{otherwise,}
\end{array}
\right.
\qquad \mbox{and}\qquad
(\S)_{vi}=\left\{
\begin{array}{ll}
1/k_i & \mbox{if $v\in U_i$}, \\
0 & \mbox{otherwise.}
\end{array}
\right.
$$
Then, for every $h,j=0,\ldots,D$,
\begin{align*}
(\S^{\top}\A_i\T)_{hj}&=\sum_{v,w\in V}(\S^{\top})_{hv}(\A_i)_{vw}(\T)_{wj}
=\sum_{v\in \G_h(u), w\in\G_i(v)\cap \G_j(u)} \frac{1}{k_h}\\
 &=\frac{1}{k_h}\sum_{v\in \G_h(u)}|\G_i(v)\cap \G_j(u)|=\overline{p}_{ji}^h=\overline{p}_{ij}^h=(\overline{\B}_i)_{hj},
\end{align*}
where we have used the symmetric property $\overline{p}_{ji}^h=\overline{p}_{ij}^h$, which is proved later in Proposition \ref{propo-dist-mat}.
\end{proof}

\subsection{Some properties}
As it could be expected, some combinatorial properties of  distance mean-regular graphs are similar to those of distance-regular graphs. The following result, which is not exhaustive, shows some simple examples.

\begin{lemma}
\label{basic-properties}
Let $\G$ be a distance mean-regular graph with diameter $D$ and parameters
as above. Then, the following holds:
\begin{itemize}
\item[$(i)$]
 The graph $\G$ is regular with degree $k=\overline{b}_0$, with $k=\overline{a}_i+\overline{b}_i+\overline{c}_i$ for every $i=0,\ldots,D$.
\item[$(ii)$]
For every vertex $u$ and $i=0,\ldots,D-1$, $k_i(u)\overline{b}_i=\overline{c}_{i+1}k_{i+1}(u)$.
\item[$(iii)$]
$\G$ is {\em distance-degree regular}: $k_i(u)=k_i$ for every $u\in V$ and $i=0,\ldots,D$.
\end{itemize}
\end{lemma}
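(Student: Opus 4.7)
My approach is to dispatch (iii) first from the $h=0$ case of the definition, then (i), and finally (ii) by a standard double-counting of edges combined with a short $(i,j)$-symmetry argument for the averaged intersection numbers.

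For (iii), I would set $h=0$ in the defining average. Since $\G_0(u)=\{u\}$ and $|\G_0(u)|=1$, the sum collapses to a single term: $\overline{p}_{ii}^{\,0}(u)=|\G_i(u)\cap \G_i(u)|=k_i(u)$. Distance mean-regularity forces this quantity to be independent of $u$, so $k_i(u)=k_i:=\overline{p}_{ii}^{\,0}$ for every $u\in V$ and $i=0,\ldots,D$, and $\G$ is distance-degree regular.

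For (i), the specialization $h=0$, $i=j=1$ gives $\overline{b}_0=\overline{p}_{1,1}^{\,0}=k_1(u)$, so $\G$ is regular of degree $k=\overline{b}_0$. For the sum identity, note that for any $v\in \G_h(u)$ the triangle inequality forces each neighbor of $u$ to lie in exactly one of $\G_{h-1}(v)$, $\G_h(v)$, $\G_{h+1}(v)$. Consequently $|\G_1(u)\cap \G_{h-1}(v)|+|\G_1(u)\cap \G_h(v)|+|\G_1(u)\cap \G_{h+1}(v)|=k$, and averaging over $v\in \G_h(u)$ yields $\overline{c}_h+\overline{a}_h+\overline{b}_h=k$.

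For (ii), my plan is to count the edges $\omega_{i,i+1}(u)$ between $\G_i(u)$ and $\G_{i+1}(u)$ from both endpoints. From the $\G_i(u)$ side one gets $\omega_{i,i+1}(u)=k_i(u)\,\overline{p}_{i+1,1}^{\,i}$, and from the $\G_{i+1}(u)$ side one gets $\omega_{i,i+1}(u)=k_{i+1}(u)\,\overline{p}_{i,1}^{\,i+1}$. To match the abbreviations $\overline{b}_i=\overline{p}_{1,i+1}^{\,i}$ and $\overline{c}_{i+1}=\overline{p}_{1,i}^{\,i+1}$, I would invoke the symmetry $\overline{p}_{ij}^{\,h}=\overline{p}_{ji}^{\,h}$ already used in the proof of $\overline{\B}_i=\S^{\top}\A_i\T$. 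This symmetry itself follows from a quick triple-count: using (iii), the number $T_{hij}$ of ordered triples $(u,v,w)$ with $\dist(u,v)=h$, $\dist(u,w)=i$, $\dist(v,w)=j$ equals $n\,k_h\,\overline{p}_{ij}^{\,h}$, while swapping $v\leftrightarrow w$ in the definition of $T_{hij}$ shows $T_{hij}=T_{hji}$.

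The main obstacle is precisely this $(i,j)$-symmetry invoked in (ii); without it, the two edge counts produce intersection means with the ``wrong'' index order relative to the statement and cannot be identified with $\overline{b}_i$ and $\overline{c}_{i+1}$. Once (iii) is available, however, the triple-counting argument supplies the symmetry cleanly, and then (i), (ii), (iii) all follow with very little calculation.
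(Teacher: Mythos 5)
Your proposal is sound in substance and, for parts $(i)$ and $(ii)$, runs along essentially the same computations as the paper; the genuine differences are in ordering and in what is made explicit. You obtain $(iii)$ directly from the $h=0$ specialization $\overline{p}_{ii}^{\,0}(u)=|\G_i(u)\cap\G_i(u)|=k_i(u)$, whereas the paper deduces $(iii)$ by induction from $(ii)$ starting at $k_0(u)=1$; your route is shorter and makes $(iii)$ independent of $(ii)$, which is convenient because you then use $(iii)$ when establishing the symmetry. For $(ii)$ you double-count the edges between $\G_i(u)$ and $\G_{i+1}(u)$ exactly as the paper does, but you correctly observe that under the literal definitions $\overline{b}_i=\overline{p}_{1,i+1}^{\,i}$ and $\overline{c}_{i+1}=\overline{p}_{1,i}^{\,i+1}$ the two edge counts produce the averages with the indices in the opposite order, so the identification requires $\overline{p}_{ij}^{\,h}=\overline{p}_{ji}^{\,h}$. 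The paper performs this index swap silently in its proof of $(ii)$ and only proves the symmetry later (Proposition \ref{propo-dist-mat}, by summing the entries of $\A_i\A_j\circ\A_h$ by rows and by columns); your triple-counting argument is that same global count phrased combinatorially, so invoking it here, after $(iii)$, is legitimate and arguably cleaner than the paper's ordering.

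One step is misstated, though easily repaired: swapping $v\leftrightarrow w$ does \emph{not} give $T_{hij}=T_{hji}$. With your convention ($\dist(u,v)=h$, $\dist(u,w)=i$, $\dist(v,w)=j$), the map $(u,v,w)\mapsto(u,w,v)$ yields $T_{hij}=T_{ihj}$, i.e.\ $k_h\overline{p}_{ij}^{\,h}=k_i\overline{p}_{hj}^{\,i}$, which is the analogue of \eqref{num-triples} relating different \emph{upper} indices, not the symmetry in the lower ones. The involution you need is $(u,v,w)\mapsto(v,u,w)$, exchanging the first two vertices: it shows $T_{hij}=T_{hji}$, hence $n k_h\overline{p}_{ij}^{\,h}=n k_h\overline{p}_{ji}^{\,h}$ and $\overline{p}_{ij}^{\,h}=\overline{p}_{ji}^{\,h}$ (equivalently, sum the entries of $\A_i\A_j\circ\A_h$ by columns instead of rows, using $(\A_i\A_j)_{uv}=(\A_j\A_i)_{vu}$). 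With that one correction your argument for $(ii)$, and hence the whole lemma, goes through.
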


\begin{proof}
$(i)$ The fist statement is clear since
$$
\overline{a}_i+\overline{b}_i+\overline{c}_i=\frac{1}{|\G_i(u)|}\sum_{v\in \G_i(u)}a_i(u,v)+b_i(u,v)+c_i(u,v),
$$
and, as $a_i(u,v)+b_i(u,v)+c_i(u,v)$ is the degree $\delta(v)=k_1(v)$ for every $v\in \G$, we have $\overline{a}_i+\overline{b}_i+\overline{c}_i=\frac{1}{|\G_i(u)|}\sum_{v\in \G_i(u)}=k$.

$(ii)$ This follows by counting in two ways the edges between $\G_i(u)$ and $\G_{i+1}(u)$:
$$
|\G_i(u)|\overline{b}_i= \sum_{v\in \G_{i}(u)} |\G_1(v)\cap \G_{i+1}(u)|
= \sum_{w\in \G_{i+1}(u)} |\G_1(w)\cap \G_{i}(u)|
=\overline{c}_{i+1}|\G_{i+1}(u)|.
$$
$(iii)$ By induction using  $(ii)$ and starting from $k_0(u)=1$ (or, by $(i)$, $k_1(u)=k$) for every $u\in V$ .
\end{proof}

Also, the intersection mean-numbers often gives similar information to that provided for the corresponding parameters of distance-regular graphs. First, recall that the {\em odd-girth} of a graph is the minimum length of an odd cycle, and the {\em even-girth} is defined analogously.

\begin{lemma}
Let $\G$ be a distance mean-regular graph with intersection mean-numbers as above. Then, the following holds:
\begin{itemize}
\item[$(i)$]
If $\overline{a}_i=0$ for all $i<m$ and $\overline{a}_m\neq 0$, then $\G$ has odd-girth $2m+1$.
\item[$(ii)$]
$\G$ has no even cycles if and only if $\overline{c}_i=1$ for any $i$. In general, the even-girth of $\G$ is $2i$, where $i$ is the minimum value such that $\overline{c}_i>1$;
\end{itemize}
\end{lemma}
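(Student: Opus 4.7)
Both parts are tackled by examining the distance sequence $d_i = \dist(u, v_i)$ of a cycle $C = v_0 v_1 \cdots v_{L-1}$ from a fixed basepoint $u=v_0$, using that $|d_{i+1}-d_i|\leq 1$ for every $i$.

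For $(i)$, in an odd cycle of length $2k+1$, since $d_0 = d_{2k+1} = 0$ and the number of $\pm 1$ steps must have the same parity as the cycle length, at least one ``$0$-step'' $d_i = d_{i+1} = j$ with $j\leq k$ must occur; at such a step the edge $v_iv_{i+1}$ lies inside $\G_j(u)$, forcing $\overline{a}_j>0$. Thus $\overline{a}_i=0$ for $i<m$ excludes all odd cycles of length $\leq 2m-1$. Conversely, $\overline{a}_m\neq 0$ provides $v,w\in\G_m(u)$ adjacent, and concatenating shortest paths $u\to v$ and $w\to u$ with the edge $vw$ yields an odd closed walk of length $2m+1$, hence an odd cycle of length $\leq 2m+1$. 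The two bounds match and give odd-girth $=2m+1$.

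For the upper bound in $(ii)$, a straightforward induction on distance shows that $\overline{c}_j=1$ for $j<i_0$ implies that shortest paths between vertices at distance $<i_0$ are unique. Given $\overline{c}_{i_0}>1$, I would fix $u$ and $v\in\G_{i_0}(u)$ with two neighbors $w_1,w_2\in\G_{i_0-1}(u)$; the unique shortest paths $P_1,P_2$ from $u$ to $w_1,w_2$ agree up to a last common vertex $x$ and are vertex-disjoint afterwards (a later meeting would contradict uniqueness at some intermediate distance). Closing $P_1,P_2$ through $v$ is then a simple even cycle of length $2(i_0-\dist(u,x))\leq 2i_0$. For the lower bound, one argues that any even cycle of length $2k<2i_0$ forces $\overline{c}_j>1$ for some $j\leq k$: from a suitable basepoint $u$, if the distance sequence has no $0$-steps then a peak $v_i$ has $d_{i-1}=d_{i+1}=d_i-1$, hence two predecessors in $\G_{d_i-1}(u)$, giving $\overline{c}_{d_i}>1$; if $0$-steps occur, the uniqueness of shortest paths for distances $<i_0$ lets one re-route around each $0$-step to produce a strictly shorter even closed walk, contradicting minimality. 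Combined with the upper bound, this pins the even-girth to exactly $2i_0$, and the ``no even cycles'' characterization is the degenerate case where no such $i_0$ exists.

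The step I expect to be the main obstacle is the lower bound in $(ii)$: ruling out that a shortest even cycle of length $<2i_0$ packs its structure into $0$-steps (which a priori contribute only to $\overline{a}_j$, not to $\overline{c}_j$). The key tool is the uniqueness of shortest paths granted by $\overline{c}_j=1$ for $j<i_0$, which allows one to trim $0$-steps and reduce to the clean peak configuration where a $\overline{c}_j>1$ witness is forced.
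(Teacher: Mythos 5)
Your part $(i)$ is correct and follows essentially the paper's own (much terser) argument: since $\overline{a}_j$ is an average of the nonnegative integers $a_j(u,v)$, the hypothesis $\overline{a}_j=0$ for $j<m$ forbids any edge inside $\G_j(u)$ for $j<m$, a horizontal ($0$-step) edge must occur on any odd cycle, and an edge inside $\G_m(u)$ closes an odd walk of length $2m+1$. (Minor wording slip: the reason a $0$-step must occur is that the $+1$ and $-1$ steps pair up, so the number of $\pm1$ steps is \emph{even} while the cycle length is odd; it is not that they have the same parity.) Your upper-bound construction in $(ii)$, via uniqueness of shortest paths below level $i_0$ and the last common vertex $x$, is also correct.

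The step you flag as the main obstacle in $(ii)$ is, however, a genuine gap, and it cannot be filled: the re-routing of a $0$-step replaces one horizontal edge by a detour of length $2$ through the previous level, which changes the parity of the closed walk and does not shorten it, so no "strictly shorter even closed walk" is obtained. More fundamentally, the statement being proved is false as written, because short even cycles can live entirely on horizontal edges, which only the $\overline{a}_j$'s see. For example, $K_4$ is distance-regular, hence distance mean-regular, and has $\overline{c}_i=1$ for every $i$ (here $D=1$), yet it contains $4$-cycles, contradicting the "no even cycles" equivalence; likewise the point graph of a generalized hexagon of order $(3,3)$ has $\overline{c}_2=1$ (the least $i$ with $\overline{c}_i>1$ is $3$) but even-girth $4$, since each line is a clique on $4$ vertices. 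The lower bound — and hence the lemma — becomes correct only under the additional hypothesis $\overline{a}_j=0$ for all $j<i$: then an even cycle of length $2k<2i$ can have no $0$-step from a basepoint on it, and your peak argument (a maximum of the distance sequence has two distinct predecessors in $\G_{d-1}(u)$, forcing $\overline{c}_{d}>1$ with $d\le k<i$) finishes the proof. So the honest conclusion is: upper bound proved, lower bound provable only with the extra no-horizontal-edges hypothesis, and the unqualified claim $(ii)$ (which the paper dismisses with "analogous") is actually false.
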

\begin{proof}
$(i)$ If, for every $i<m$, $\overline{a}_i=0$, for every pair of vertices $u,v$ at distance $i$, we must have
$a_i(u,v)=0$. Then $\G$ cannot contain any odd cycle of length $2i+1$. Moreover, if $\overline{a}_m\neq 0$, it must be some vertices $u,v$ such that $a_m(u,v)\ge 1$ and, hence, there is an odd cycle of length $2m+1$.
The proof of $(ii)$ is analogous.
\end{proof}

In contrast with the above, some other well-known properties of distance-regular graphs are not shared by the distance mean-regular graphs. For instance, the intersection numbers $b_i$ and $c_i$ of a distance-regular graph satisfy the monotonic properties
$\overline{b}_0\ge \overline{b}_1\ge \overline{b}_2\ge \cdots $ and
$\overline{c}_1\le \overline{c}_2\le \overline{c}_3\ge \cdots $, whereas  this is not the case for the intersection mean-parameters $\overline{b}_i$ and
$\overline{c}_i$ (see, e.g. the truncated tetrahedron studied in Subsection \ref{trunc-tetra}).

Since every distance-regular graph is also distance mean-regular, one natural question would be the following:
\begin{quest}
If the intersection mean-parameters of a distance mean-regular graph are integers, is there always a distance-regular graph with these parameters?
\end{quest}

In fact, Brouwer \cite{b15} gave a negative answer to this question by noting that the Cayley graph $\G=\Cay(\Z_{21}; \pm i, i=1,\ldots,5\}$, with diameter $D=2$, has the parameters of a nonexisting strongly regular graph. Indeed, as a distance
mean-regular, $\G$ has the integer intersection mean-matrix
$$
\overline{\B}=\left(\begin{array}{ccc}
  \overline{a}_0&\overline{b}_0&0\\
  \overline{c}_1&\overline{a}_1&\overline{b}_1\\
  0&\overline{c}_2&\overline{a}_2\\
\end{array}\right)
  =
  \left(\begin{array}{ccc}
  0&10&0\\
  1&6&3\\
  0&3&7\\
\end{array}\right).
$$
However, $\G$ has $d+1=11$ distinct eigenvalues and, hence, it cannot be distance-regular $(D\neq d)$.

%


\subsection{Orthogonal polynomials}
\label{ortho-pol}
From the proper intersection mean-matrix $\overline{\B}$ of a distance mean-regular graph $\G$ with $n$ vertices and diameter $D$,
we can construct an orthogonal sequence of polynomials by using the three-term recurrence
\begin{equation}
\label{3-term-pol}
x\overline{p}_i=\overline{b}_{i-1}\overline{p}_{i-1}+  \overline{a}_{i}\overline{p}_{i}+\overline{c}_{i+1}\overline{p}_{i+1},\qquad i=0,1,\ldots,D,
\end{equation}
initiated with $\overline{p}_0=1$ and $\overline{p}_1=x$, and where, by convention, $\overline{b}_{-1}=\overline{c}_{i+1}=0$.

According to the results in C\'amara,  F\`abrega,  Fiol, Garriga \cite{cffg09},
these polynomials, which here will be called the {\em distance mean-polynomials}, are orthogonal with respect to the scalar product
\begin{equation}
\label{star-product}
\langle f,g\rangle_{\star}=\frac{1}{n}\sum_{i=0}^g w_i f(\mu_i)g(\mu_i)
\end{equation}
where  $\mu_0>\mu_1>\cdots>\mu_{D}$ are the distinct eigenvalues of $\overline{\B}$, and their {\em pseudo-multiplicities} $w_i$ are computed with the formulas
\begin{equation}
\label{pseudo-mul}
w_i = \frac{\pi_0 \overline{p}_D(\mu_0)}{\pi_i\overline{p}_D(\mu_i)}
\end{equation}
where $\pi_i=\prod_{j\ne i}|\mu_i-\mu_j|$, $i=0,\ldots,D$.
Moreover, these polynomials are normalized in such a way that $\|\overline{p}_i\|^2_{\star}=\overline{p}_i(\lambda_0)=k_i$ for $i=0,\ldots,D$.

By evaluating these polynomials at $\A$ or $\overline{\B}$, we obtain, respectively, the so-called {\em distance mean-matrices}
\begin{equation}
\label{mean-pols->mean-matrices}
\overline{\A}_i=\overline{p}_{i}(\A),\qquad i=0,\ldots,D,
\end{equation}
and the {\em intersection mean-matrices}
\begin{equation}\label{mean-pols->intersec-mean-matrices}
\overline{\B}_i=\overline{p}_i(\overline{\B}),\qquad i=0,\ldots,D.
\end{equation}
Of course, both families of matrices satisfy a three-term recurrence like \eqref{3-term-pol}. For instance, the distance mean-matrices satisfy
\begin{equation}\label{recur-dist-mean-matrices}
\A\overline{\A}_i=\overline{b}_{i-1}\overline{\A}_{i-1}+  \overline{\A}_{i}\overline{\A}_{i}+\overline{c}_{i+1}\overline{\A}_{i+1},\qquad i=0,1,\ldots,D,
\end{equation}
starting  with $\overline{\A}_0=\I$ and $\overline{\A}_1=\A$ (by convention, $\overline{\A}_{-1}=\overline{\A}_{i+1}=\00$).
Besides, because of \eqref{mean-pols->mean-matrices} and \eqref{mean-pols->intersec-mean-matrices}, both $\overline{\A}_i$ and $\overline{\B}_i$ have constant row sum $p_i(\lambda_0)$. For instance, $\overline{\A}_i\j=\overline{p}_i(\lambda_0)\j=k_i\j$, where $\j$ is the all-1 vector.

Concerning the intersection mean-matrices, notice that $\overline{\B}_0=\overline{p}_0(\overline{\B})=\I$, with $(\I)_{hj}=\overline{p}_{0j}^h$, and $\overline{\B}_1=\overline{p}_0(\overline{\B})=\overline{\B}$, with $(\overline{\B})_{hj}=\overline{p}_{1j}^h$.
In general, in Section \ref{algebras} we will show that, under some conditions, the intersection-$i$ matrix is proper for every $i=0,\ldots,D$. That is, it satisfies \eqref{Ai->Bi} and, hence,
\begin{equation}
\label{entries-Bi}
(\overline{\B}_i)_{hj}=\overline{p}_{ij}^h.
\end{equation}
%

\subsection{An Example}
As an example, consider 
the prism $\G=C_5\times \G=K_2$ shown in Fig. \ref{C5xK2}, which is a distance
mean-regular graph with spectrum
(here and henceforth, numbers are rounded at three decimals)
$$
\spec \G=\{3, 1.618^2,  1^1, -0.382^2, -0.618^2, -2.618^2\}.
$$
\begin{figure}[t]
\begin{center}
\begin{tikzpicture}[scale=1.5]

\path (0,0) coordinate (P1);
\node  at (0,0.3) {$1$};
\path (-1,-1) coordinate (P2);
\node  at (-1.3,-1) {$2$};
\path (0,-1) coordinate (P3);
\node  at (0,-1.3) {$6$};
\path (1,-1) coordinate (P4);
\node  at (1.3,-1) {$5$};

\path (-0.4,-2) coordinate (P5);
\node  at (-0.7,-2) {$3$};
\path (0.5,-2) coordinate (P6);
\node  at (0.8,-2) {$4$};
\path (-1.5,-2) coordinate (P7);
\node  at (-1.8,-2) {$7$};
\path (1.5,-2) coordinate (P8);
\node  at (1.8,-2) {$10$};

\path (-0.65,-3) coordinate (P9);
\node  at (-0.65,-3.3) {$8$};
\path (0.65,-3) coordinate (P10);
\node  at (0.65,-3.3) {$9$};

\foreach \i in {1,...,10}
{
\draw[fill] (P\i) circle (2pt);
}

\foreach \i / \j  in {1/2,1/3,1/4,2/5,2/7,3/7,8/3,6/4,8/4,9/5,10/6,7/9,10/8,10/9,6/5}
{
 \draw (P\i) --(P\j);
}

\node  at (2.5,0) {$U_0$};
\node  at (2.5,-1) {$U_1$};
\node  at (2.5,-2) {$U_2$};
\node  at (2.5,-3) {$U_3$};

\end{tikzpicture}
\end{center}
\caption{The prism $\G=C_5 \times K_2$}
\label{C5xK2}
\end{figure}
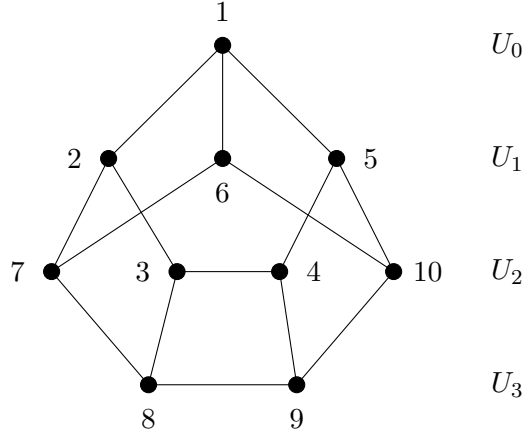
Its quotient matrix with respect to the distance partition ${\cal P}=\{1\}\cup\{2,6,5\}\cup\{7,3,4,10\}\cup\{8,9\}$ can be obtained from the matrices
$$
\T^{\top}=\left(\begin{array}{cccccccccc}
  1&0&0&0&0&0&0&0&0&0\\
  0&1&0&0&1&1&0&0&0&0\\
  0&0&1&1&0&0&1&0&0&1\\
0&0&0&0&0&0&0&1&1&0\\
\end{array}\right)
\hspace{2mm}
\begin{array}{c}
U_0\\
U_1\\
U_2\\
U_3\\
\end{array},
$$
$\D=\diag(1, 3, 4, 2)$, and
$$
\S^{\top}=\D^{-1}\T^{\top}=\left(\begin{array}{cccccccccc}
  1&0&0&0&0&0&0&0&0&0\\
  0&\frac{1}{3}&0&0&\frac{1}{3}&\frac{1}{3}&0&0&0&0\\
  0&0&\frac{1}{4}&\frac{1}{4}&0&0&\frac{1}{4}&0&0&\frac{1}{4}\\
0&0&0&0&0&0&0&\frac{1}{2}&\frac{1}{2}&0\\
\end{array}\right)
\hspace{2mm}
\begin{array}{c}
U_0\\
U_1\\
U_2\\
U_3\\
\end{array},
$$
stisfying
$\S^{\top} \T=\I$.
Then, the (proper) intersection mean-matrix is
$$
\overline{\B}= \S^{\top}\A\T=\left(\begin{array}{cccc}
  0&3&0&0\\
  1&0&2&0\\
  0&\frac{3}{2}&\frac{1}{2}&1\\
0&0&2&1\\
\end{array}\right)
=
\left(\begin{array}{cccc}
  \overline{a}_0&\overline{b}_0&0&0\\
  \overline{c}_1&\overline{a}_1&\overline{b}_1&0\\
  0&\overline{c}_2&\overline{a}_2&\overline{b}_2\\
0&0&\overline{c}_3&\overline{a}_3\\
\end{array}\right),
$$
with eigenvalues
$$
\mu_1=3,\quad \mu_2=1.402,\quad \mu_3=-0.433,\quad \mu_4=-2.469,
$$
which interlace the eigenvalues of $\A$.
The distance mean-polynomials, and their values at $\lambda_0=3$, are
\begin{equation*}
\begin{array}{ll}
\label{}
\overline{p}_0(x) = 1,                           & \overline{p}_0(3)=1,\\
\overline{p}_1(x) = x,                           & \overline{p}_1(3)=3,\\
\overline{p}_2(x) = \frac{1}{3} (2x^2-6),        &\overline{p}_2(3)=4,\\
\overline{p}_3(x) =  \frac{1}{6}(2x^3-x^2-12x+3),& \overline{p}_4(3)=2.
\end{array}
\end{equation*}
From $p_3$ we then compute the pseudo-multiplicities by using \eqref{pseudo-mul}, which are
$w_0=1$, $w_1= 3.085$, $w_2=3.575$, and $w_3= 2.340$ (notice that, as required,  they sum up to $n=10$).
Moreover, besides $\overline{p}_0(\overline{\B})=\I$ and $\overline{p}_1(\overline{\B})=\overline{\B}$, we have the other proper intersection mean-matrices:
\begin{align*}
\overline{\B}_2 & =\S^{\top}\A_2\T=\overline{p}_2(\overline{\B})=
\left(\begin{array}{cccc}
  0&0&4&0\\
  0&2&\frac{2}{3}&\frac{4}{3}\\
  1&\frac{1}{2}&\frac{3}{2}&1\\
  0&2&2&0\\
\end{array}\right), \\
\overline{\B}_3 &=\S^{\top}\A_3\T=\overline{p}_3(\overline{\B})=
\left(\begin{array}{cccc}
  0&0&0&2\\
  0&0&\frac{4}{3}&\frac{2}{3}\\
  0&1&1&0\\
1&1&0&0\\
\end{array}\right).
\end{align*}

\section{Characterizations}
In this section we give several (combinatorial and algebraic) characterizations of distance mean-regular graphs.

\subsection{The numbers of edges}
Distance mean-regular graphs have to do with the invariance of the number of edges whose endvertices are  at a given  distance from each vertex $u$. More precisely, for a graph $\G=(V,E)$ with diameter $D$, a vertex $u$, and integers $i,j=0,\ldots,D$, let us consider the following parameters:
\begin{align*}
\omega_{ij}(u) & = |\{vw\in E : \dist(u,v\}=i, \dist(u,w)=j\}|.
\end{align*}
Notice that $\omega_{ij}(u)=0$ if $|i-j|>1$. Besides, some trivial values are $\omega_{00}(u)=0$ (since there are no loops), and $\omega_{01}=\delta(u)$, the degree of $u$.
If these numbers do not depend on $u$, we say that they are {\em well defined}, and represent them as $\omega_{ij}$.

\begin{lemma}
\label{charac1}
A graph $\G$ with diameter $D$ is distance mean-regular if and only if the numbers
$\omega_{ij}$ are well defined for every $i,j=0,\ldots,D$.
\end{lemma}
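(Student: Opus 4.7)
The plan is to prove the two directions separately. The forward direction will follow from a direct counting calculation, while the converse requires first extracting distance-degree regularity from the $\omega$-data and then invoking the earlier Proposition, which characterises distance mean-regularity through invariance of the quotient matrix of the distance partition.

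For the forward direction, suppose $\G$ is distance mean-regular. Fix a vertex $u$ and, for $i\neq j$, group the edges counted by $\omega_{ij}(u)$ according to their endpoint in $\G_i(u)$:
\[
\omega_{ij}(u)=\sum_{v\in\G_i(u)}|\G_1(v)\cap\G_j(u)|=k_i(u)\,\overline{p}_{j1}^{i}(u).
\]
The case $i=j$ is identical except that each edge inside $\G_i(u)$ is counted twice, yielding $\omega_{ii}(u)=\tfrac12 k_i(u)\,\overline{p}_{i1}^{i}(u)$. Both $k_i(u)=k_i$ (by Lemma~\ref{basic-properties}$(iii)$) and $\overline{p}_{j1}^{i}(u)$ (by the definition of distance mean-regularity) are independent of $u$, so every $\omega_{ij}$ is well defined.

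For the converse, assume every $\omega_{ij}$ is well defined. Since $\omega_{01}(u)=\delta(u)$, the graph is $k$-regular with $k:=\omega_{01}$. Double-counting, for each $u$ and $i$, the pairs $(v,w)$ with $v\in\G_i(u)$ and $w\in\G_1(v)$ gives $k\cdot k_i(u)$ on one hand, and $\omega_{i-1,i}+2\omega_{ii}+\omega_{i,i+1}$ on the other, the factor $2$ accounting for edges internal to $\G_i(u)$. Hence
\[
k_i(u)=\frac{\omega_{i-1,i}+2\omega_{ii}+\omega_{i,i+1}}{k}
\]
is independent of $u$, so $\G$ is distance-degree regular. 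The only non-zero entries of the quotient matrix of the distance partition at $u$, namely $(\B_u)_{h,h\pm 1}=\omega_{h,h\pm 1}/k_h$ and $(\B_u)_{h,h}=2\omega_{hh}/k_h$, are therefore independent of $u$, so $\B_u$ coincides with a single matrix $\overline{\B}$ for every vertex $u$. The earlier Proposition then yields that $\G$ is distance mean-regular.

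The main obstacle is the converse. The $\omega_{ij}$ carry only the row-sum data of the quotient matrix, essentially the $j=1$ intersection averages $\overline{p}_{1j}^{h}$, so a separate argument is needed to rule out that the sizes $k_i(u)$ vary with $u$ while still yielding the same edge totals. The counting identity $k\cdot k_i(u)=\omega_{i-1,i}+2\omega_{ii}+\omega_{i,i+1}$ supplies exactly this, exploiting the regularity established at the first step; once the $k_i(u)$ are pinned down, invariance of $\B_u$ together with the earlier Proposition completes the argument.
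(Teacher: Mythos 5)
Your proof is correct and follows essentially the same route as the paper: the forward direction uses the same edge counts $\omega_{ii}=\tfrac12 k_i\overline{a}_i$ and $\omega_{i,j}=k_i\overline{p}_{j1}^i$ for $j=i\pm1$, and the converse uses the same double count $k\,k_i(u)=\omega_{i-1,i}+2\omega_{ii}+\omega_{i,i+1}$ to pin down the $k_i(u)$ and hence the intersection mean-numbers. The only cosmetic difference is that you finish by invoking the quotient-matrix Proposition explicitly, whereas the paper concludes directly once $\overline{a}_i,\overline{b}_i,\overline{c}_i$ are seen to be well defined; this is the same step, and if anything your phrasing makes the paper's implicit appeal to that characterization explicit.
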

\begin{proof}
If $\G$ is distance mean-regular, then  $\omega_{ij}$ is well defined since, by Lemma \ref{basic-properties}, $\omega_{ii}=\overline{a}_i k_i/2$ and $\omega_{i,i+1}=k_i \overline{b}_i=k_{i+1}\overline{c}_{i+1}$.
Conversely, if the $\omega_{ij}$'s are well defined, $\G$ is regular with degree $k=\omega_{01}$. Moreover, for any vertex $u$,
$$
k_i(u)=\frac{1}{k_i}\sum_{v\in \G_i(u)}\delta(v)=\frac{1}{k_i}(\omega_{i-1,i}+2\omega_{i,i}+\omega_{i,i+1}),
$$
so that $k_i$ is well defined. Then, $\overline{a}_i=\frac{2}{k_i}\omega_{i,i}$,
$\overline{b}_i=\frac{1}{k_i}\omega_{i,i+1}$, and $\overline{c}_i=\frac{1}{k_i}\omega_{i-1,i}$ are also well defined, and $\G$ is distance mean-regular.
\end{proof}

Although  every distance mean-regular is super-regular (Lemma \ref{basic-properties}$(iii)$), the converse is not true.
A counterexample is, for instance, the graph of Fig. \ref{figxyz} on $n=12$ vertices and diameter $D=2$, which is super-regular with $k_1=5$ and $k_2=6$, but not distance mean-regular.

 \begin{figure}[h]
\center
  \includegraphics[scale=0.8]{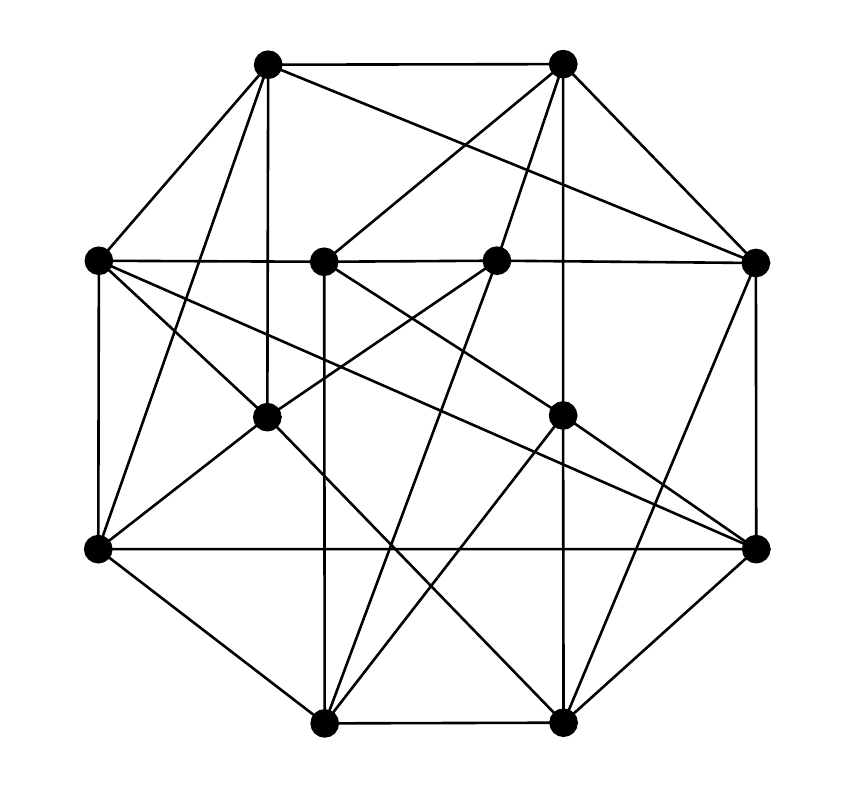}
  \caption{A super-regular, but not distance mean-regular, graph.}
  \label{figxyz}
\end{figure}


Hovewer, we have the following characterization of those super-regular graphs which are distance mean-regular:

\begin{proposition}
Let $\G$ be a super-regular graph. Then, $\G$ is distance mean-regular if and only if $\omega_{ii}$ is well defined for every $i=0,\ldots,D$.
\end{proposition}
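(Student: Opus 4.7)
The forward direction is immediate: if $\G$ is distance mean-regular, Lemma \ref{charac1} already tells us that every $\omega_{ij}$ is well defined, so in particular the diagonal ones $\omega_{ii}$ are.

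For the converse, my plan is to combine the hypothesis that $\G$ is super-regular (so each $k_i$ is a well-defined constant) with the hypothesis that each $\omega_{ii}$ is well defined, and then to deduce that the off-diagonal numbers $\omega_{i,i+1}$ are also well defined. Once that is done, Lemma \ref{charac1} finishes the argument, since $\omega_{ij}=0$ whenever $|i-j|>1$.

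The key identity is the double count of edge-endpoints incident to the distance class $\G_i(u)$. Super-regularity forces $\G$ to be regular of degree $k=k_1$, so summing $\delta(v)$ over $v\in \G_i(u)$ gives $k\,k_i$ on one hand, and $\omega_{i-1,i}(u)+2\omega_{ii}(u)+\omega_{i,i+1}(u)$ on the other (an edge inside $\G_i(u)$ contributes $2$, an edge crossing to $\G_{i\pm 1}(u)$ contributes $1$). Rearranging,
\begin{equation*}
\omega_{i,i+1}(u)\;=\;k\,k_i\;-\;2\omega_{ii}(u)\;-\;\omega_{i-1,i}(u).
\end{equation*}
I would then induct on $i$. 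The base case $i=0$ is trivial, since $\omega_{00}=0$ and the formula yields $\omega_{01}(u)=k$. For the inductive step, the right-hand side is a function of quantities already known to be independent of $u$ (the constants $k$, $k_i$, $\omega_{ii}$, and the inductively-well-defined $\omega_{i-1,i}$), so $\omega_{i,i+1}(u)$ is likewise independent of $u$.

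There is no real obstacle here; the only thing to be mildly careful about is the bookkeeping factor of $2$ for edges lying inside a single distance class, and the initialization at $i=0$. Once all $\omega_{ij}$ are shown to be well defined, Lemma \ref{charac1} delivers distance mean-regularity directly.
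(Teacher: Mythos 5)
Your proof is correct and is essentially the paper's own argument in different clothing: the paper runs the same induction on $i$, but phrased in the normalized intersection mean-numbers via $\overline{a}_i=2\omega_{ii}/k_i$, $\overline{c}_{i+1}=\overline{b}_i k_i/k_{i+1}$ (Lemma \ref{basic-properties}$(ii)$) and $\overline{b}_{i+1}=k-\overline{a}_{i+1}-\overline{c}_{i+1}$ (Lemma \ref{basic-properties}$(i)$), which is exactly your identity $\omega_{i,i+1}=k\,k_i-2\omega_{ii}-\omega_{i-1,i}$ after dividing by $k_i$. The only cosmetic difference is that you close by feeding all the well-defined $\omega_{ij}$ back into Lemma \ref{charac1}, whereas the paper stops once $\overline{a}_i,\overline{b}_i,\overline{c}_i$ are shown to be well defined.
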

\begin{proof}
Necessity follows from Lemma \ref{charac1}.
To prove sufficiency, let $\G$ be a super-regular graph with well-defined $\omega_{ii}$  for every $i=0,\ldots,D$. Then,  $\overline{a}_i=2\omega_{ii}/k_i$ is well defined.
Let us now proceed by induction on $i$. Since $\G$ is regular,  $\overline{b}_0=k$ is well defined. Now suppose that all the intersection mean-numbers are well defined until $\overline{b}_i$. Then, from Lemma \ref{basic-properties}$(ii)$, $\overline{c}_{i+1}=\overline{b}_i\frac{k_i}{k_{i+1}}$ is well defined, and, from Lemma \ref{basic-properties}$(i)$,
$\overline{b}_{i+1}=k-\overline{a}_{i+1}-\overline{c}_{i+1}$ also is. This complete the induction step.
\end{proof}

\subsection{The numbers of triples}
Other parameters that characterize distance mean-regularity are the numbers of triples of vertices at some given distances between them.
More precisely, given a graph $\G$ with diameter $D$, three integers $h,i,j=0,\ldots,D$, and a fixed vertex $u$, let $t_{hij}(u)$ the number of triples $u,v,w\in V$ such that $\dist(u,v)=h$, $\dist(u,w)=i$, and $\dist(v,w)=j$. Note that these numbers can be computed in two ways
$$
t_{hij}(u)=\sum_{v\in \G_h(u)}|\G_j(v)\cap \G_i(u) |=\sum_{w\in \G_i(u)}|\G_j(w)\cap \G_h(u)|.
$$
Thus, if $\G$ is distance mean-regular $t_{hij}$ is {\em well defined} (that is, it does not depend on $u$) since
\begin{equation}
\label{num-triples}
t_{hij}=k_h\overline{p}_{ji}^h=k_i\overline{p}_{jh}^i.
\end{equation}
In particular, with $j=1$ and $i=h+1$, the equation above gives $k_h\overline{p}_{1,h+1}^h=k_{h+1}\overline{p}_{1h}^{h+1}$, which corresponds to  Lemma \ref{basic-properties}$(ii)$-$(iii)$.

Conversely, if the numbers $t_{hij}$ are well defined for $h,i,j=0,\ldots,D$, then $\G$ is super regular with $k_h=t_{hh0}$, and \eqref{num-triples} yields that the intersection numbers $\overline{p}_{ji}^h=t_{hij}/k_h$ are also well defined. Hence, we have proved the following characterization.

\begin{proposition}
A graph $\G$ with diameter $D$ is distance mean-regular if and only if the numbers of triples $t_{hij}$ are well defined for $h,i,j=0,\ldots,D$.
\hfill $\square$
\end{proposition}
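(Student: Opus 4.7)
The plan is to prove the two implications by directly using the identity relating the triple counts $t_{hij}(u)$ to the averaged intersection numbers $\overline{p}_{ij}^h(u)$, so the heart of the argument reduces to showing that $k_h(u)$ is also determined by the $t$-numbers.

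For the forward direction, I assume $\G$ is distance mean-regular. By Lemma \ref{basic-properties}$(iii)$, $\G$ is distance-degree regular, hence $k_h(u)=k_h$ is independent of $u$ for every $h$. A direct double count (as in \eqref{num-triples}) gives
$$
t_{hij}(u)=\sum_{v\in \G_h(u)}|\G_i(u)\cap \G_j(v)|=k_h(u)\,\overline{p}_{ij}^{\,h}(u),
$$
and both factors on the right are independent of $u$ by hypothesis. Hence $t_{hij}$ is well defined.

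For the reverse direction, I assume the numbers $t_{hij}$ are well defined for all $h,i,j$, and I first extract $k_h$ from the triples. Taking $i=h$ and $j=0$ in the defining sum yields
$$
t_{hh0}(u)=\sum_{v\in \G_h(u)}|\G_h(u)\cap \G_0(v)|=\sum_{v\in \G_h(u)}|\G_h(u)\cap\{v\}|=|\G_h(u)|=k_h(u),
$$
so $k_h(u)=t_{hh0}$ is independent of $u$ and $\G$ is super-regular. Using the same double count as above, for every $u$ one has $k_h\,\overline{p}_{ij}^{\,h}(u)=t_{hij}(u)=t_{hij}$, so $\overline{p}_{ij}^{\,h}(u)=t_{hij}/k_h$ does not depend on $u$ either, which is exactly the definition of distance mean-regularity.

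There is no real obstacle here: the only thing worth being careful about is to select the right specialization ($i=h$, $j=0$) to recover $k_h$ from the $t$-numbers, since without the super-regularity of $\G$ one cannot divide $t_{hij}$ by $k_h$ to recover $\overline{p}_{ij}^{\,h}$. Once that small observation is in place, both implications are immediate from the identity $t_{hij}(u)=k_h(u)\,\overline{p}_{ij}^{\,h}(u)$ already displayed in \eqref{num-triples}.
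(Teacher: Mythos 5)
Your proof is correct and follows essentially the same route as the paper: the forward direction uses distance-degree regularity (Lemma \ref{basic-properties}$(iii)$) together with the identity $t_{hij}(u)=k_h(u)\,\overline{p}_{ij}^{\,h}(u)$, and the reverse direction recovers $k_h=t_{hh0}$ and then divides. The specialization $i=h$, $j=0$ that you single out as the key observation is exactly the one the paper uses.
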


\subsection{The distance matrices}
Within the vector space of real symmetric $n\times n$ matrices, here we use the standard scalar product
$$
\langle \M,\N\rangle=\frac{1}{n}\tr (\M\N)= \frac{1}{n}\som (\M\circ \N),
$$ where `$\circ$' stands for the entrywise or Hadamard product, and $\som(\cdot)$ denotes the sum of the entries of the
corresponding matrix. 

In terms of the distance matrices of a graph, we have the following characterization of distance mean-regular graphs.
\begin{proposition}
\label{propo-dist-mat}
A graph $\G$ with diameter $D$ and distance matrices $\A_0,\A_1,\ldots,\A_D$ is distance mean-regular if and only if, for any $h,i,j=0,\ldots,D$,
the matrix $\A_i\A_j\circ \A_h$ has left and right eigenvectors $\j^{\top}$ and $\j$. Then, in both cases, the corresponding eigenvalues are equal to $k_h\theta$, where  $k_h=\|\A_h\|^2$ is the (common) number of vertices at distance $h$ from any vertex, and
\begin{equation}
\label{commut}
\theta=\overline{p}_{ij}^h=\overline{p}_{ji}^h=\frac{\langle \A_i\A_j, \A_h\rangle}{\|\A_h\|^2}.
\end{equation}
\end{proposition}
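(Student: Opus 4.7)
The key observation is to unpack the Hadamard/matrix product entrywise. From $(\A_i\A_j)_{uv}=\sum_w (\A_i)_{uw}(\A_j)_{wv}=|\G_i(u)\cap \G_j(v)|$, I get
\[
(\A_i\A_j\circ \A_h)_{uv}
=\bigl[\dist(u,v)=h\bigr]\cdot|\G_i(u)\cap \G_j(v)|.
\]
So the $u$-th entry of $(\A_i\A_j\circ \A_h)\j$ is $\sum_{v\in \G_h(u)}|\G_i(u)\cap\G_j(v)|=k_h\,\overline{p}_{ij}^h(u)$, using that $\G$ is super-regular (which will be established during the argument, or follows from \eqref{num-triples}-style reasoning). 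Saying that $\j$ is a right eigenvector with some eigenvalue $\lambda$ is therefore the statement that $k_h\,\overline{p}_{ij}^h(u)$ is independent of $u$, i.e.\ the $(h,i,j)$ instance of distance mean-regularity.

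For the left-eigenvector condition I use $(\A_i\A_j)^{\top}=\A_j\A_i$ (because $\A_i,\A_j$ are symmetric) together with the symmetry of $\A_h$, to obtain $(\A_i\A_j\circ \A_h)^{\top}=\A_j\A_i\circ \A_h$. Applying the previous calculation to $\A_j\A_i\circ \A_h$ shows that the $v$-th entry of $\j^{\top}(\A_i\A_j\circ \A_h)$ equals $k_h\,\overline{p}_{ji}^h(v)$. Hence $\j^{\top}$ being a left eigenvector is equivalent to $\overline{p}_{ji}^h(v)$ being independent of $v$. Having both conditions simultaneously, with a common eigenvalue $k_h\theta$ (the eigenvalues must agree since $\j^{\top}(\A_i\A_j\circ \A_h)\j$ can be evaluated in two ways), forces $\overline{p}_{ij}^h=\overline{p}_{ji}^h=\theta$; this simultaneously yields the symmetry relation used earlier in the manuscript, as well as the forward and backward implications of the equivalence.

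For the closed form of $\theta$, I will compute
\[
\langle \A_i\A_j,\A_h\rangle
=\frac{1}{n}\som(\A_i\A_j\circ \A_h)
=\frac{1}{n}\,\j^{\top}(\A_i\A_j\circ \A_h)\j
=\frac{1}{n}\cdot n\cdot k_h\theta
=k_h\theta,
\]
while $\|\A_h\|^2=\langle \A_h,\A_h\rangle=\frac{1}{n}\som(\A_h\circ \A_h)=\frac{1}{n}\cdot nk_h=k_h$. Dividing gives $\theta=\langle \A_i\A_j,\A_h\rangle/\|\A_h\|^2$ and identifies $\|\A_h\|^2$ with the common distance-degree $k_h$.

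The only subtle point is the interplay that yields $\overline{p}_{ij}^h=\overline{p}_{ji}^h$: one must be careful not to assume this symmetry from the outset, since the definition of $\overline{p}_{ij}^h$ is manifestly asymmetric in $i,j$. The left/right eigenvector dichotomy handles this cleanly, because the two sides of the identity $\j^{\top}(\A_i\A_j\circ \A_h)\j$ (viewed either as $\j^{\top}((\A_i\A_j\circ \A_h)\j)$ or as $(\j^{\top}(\A_i\A_j\circ \A_h))\j$) give the two average numbers $\overline{p}_{ij}^h$ and $\overline{p}_{ji}^h$, forcing their equality. Everything else is a routine entrywise computation.
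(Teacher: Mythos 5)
Your proposal is correct and takes essentially the same route as the paper: the entrywise identification $(\A_i\A_j\circ\A_h)_{uv}=[\dist(u,v)=h]\,p_{ij}^h(u,v)$, constancy of row sums (resp.\ column sums, via transposition) as the meaning of the right (resp.\ left) eigenvector condition, and the two-way evaluation of $\j^{\top}(\A_i\A_j\circ \A_h)\j$ to force $\overline{p}_{ij}^h=\overline{p}_{ji}^h=\theta$ together with the Fourier-coefficient formula $\theta=\langle \A_i\A_j,\A_h\rangle/\|\A_h\|^2$. The one point you defer (constancy of $k_h(u)$ in the converse direction) is settled exactly as in the paper, by the special case $i=h$, $j=0$, where the row sums of $\A_h\circ\A_h=\A_h$ are the numbers $|\G_h(u)|$ themselves.
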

\begin{proof}
Let $u,v$ be two vertices at distance $h$. Then,
$$
(\A_i\A_j)_{uv}=\sum_{w\in V}(\A_i)_{uw}(\A_j)_{wv}=|\G_i(u)\cap \G_j(v)|=p_{ij}^h(u,v).
$$
Thus, $\A_i\A_j\circ \A_h$ has right eigenvalue $\j$, if and only if all its row sums have constant value
$\sum_{v\in\G_h(u)}p_{ij}^h(u,v)$.
In particular, when $h=i$ and $j=0$, the above means that the number of vertices at distance $h$ from every vertex $u$, $|\G_h(u)|$, has constant value $k_h$. Consequently,
$$
\overline{p}_{ij}^h=\frac{1}{k_h}\sum_{v\in\G_h(u)}(\A_i\A_j)_{uv}
=\frac{1}{k_h}\som(\A_i\A_j\circ \A_k)
=\frac{\langle \A_i\A_j, \A_h\rangle}{\|\A_h\|^2}.
$$
Finally, reasoning analogously, we get that $\A_h$ has left eigenvalue $\j^{\top}$ if and only if its column sums have constant value $\sum_{u\in\G_h(v)}p_{ij}^h(v,u)=k_h \overline{p}_{ji}$. Then, by adding in two ways all the entries of $\A_i\A_j\circ \A_h$ we conclude that $\overline{p}_{ij}^h=\overline{p}_{ji}^h$.
\end{proof}


\section{Algebras}
\label{algebras}
In this section we show that, as in the theory of distance regular-graphs,
distance mean-regular graphs have associated some matrix (and polynomial) algebras,
from which we can retrieve some of their main parameters.

\subsection{Basic concepts}
Let us first recall some basic concepts about associative algebras (see e.g. \cite{hggk04}).
Let ${\cal A}$ be a finite-dimensional (associative) algebra over a field $K$. Then, its bilinear multiplication from ${\cal A} \times {\cal A}$ to ${\cal A}$, denoted by `$\star$', is completely determined by the multiplication of basis elements of ${\cal A}$. Conversely, once a basis for ${\cal A}$ has been chosen, the products of basis elements can be set arbitrarily, and then extended in a unique way to a bilinear operator on ${\cal A}$, so giving rise to a (not necessarily associative)  algebra.
Thus, ${\cal A}$ can be specified, up to isomorphism, by giving its dimension (say $d$), and specifying $d^3$ {\em structure coefficients} $c_{h,i,j}$, which are scalars and determine the multiplication in ${\cal A}$ via the following rule:
$$
\e_{i}\star \e_{j} = \sum_{h=1}^d c_{h,i,j} \e_{h},\qquad h,i,j=0,1,\ldots d,
$$
where $\e_1,...,\e_d$ form a basis of ${\cal A}$.

A {\em representation} of an associative algebra ${\cal A}$ is a vector space $V$ equipped with a linear mapping $\varrho: {\cal A}\rightarrow \End V$ preserving the product and the unit.
A representation is {\em faithful} when $\varrho$ is injective. Then, distinct elements of $\x\in{\cal A}$ are represented by distinct elements  $\varrho(\x)\in \End V$.

\subsection{Algebras from distance mean-regular graphs}
Let $\G$ be a distance mean-regular graph with diameter $D$, adjacency matrix $\A$, and $d+1$ distinct eigenvalues.
Then, we can consider the following vector spaces over $\Re$:

\begin{itemize}
\item
${\cal A}=\spaan(\I,\A,\A^2,\ldots,\A^d)$;
\item
${\cal D}=\spaan(\I,\A,\A_2,\ldots,\A_D)$;
\item
$\overline{{\cal D}}=\spaan(\I,\A,\overline{\A}_2,\ldots,\overline{\A}_D)$;
\item
$\overline{{\cal B}}=\spaan(\I,\overline{\B},\overline{\B}_2,\ldots,\overline{\B}_D)$.
\end{itemize}

As it is well known, ${\cal A}$ is an algebra with the ordinary product of matrices, and it is called the {\em adjacency algebra} of $\G$.
Moreover $\G$ is distance-regular if and only if ${\cal D}={\cal A}$, which implies $D=d$ (see e.g. \cite{bcn89,b93}). In this case ${\cal A}$ is the so-called {\em Bose-Mesner algebra} of $\G$, where multiplication on the basis $\I,\A,\ldots,\A_d$ is again the usual product of matrices since
\begin{equation}
\label{mult-mean-matrices}
\A_i\A_j=\sum_{h=0}^d p_{ij}^h\A_h,\qquad h,i,j=0,1,\ldots,d.
\end{equation}

To obtain an algebra from  $\overline{{\cal D}}$,  we define the {\em star  product} `$\star$' in the following way  (recall that $\langle \M,\N\rangle=\frac{1}{n}\tr(\M\N)$).
\begin{equation}
\label{mult-dist-matrices}
\A_i\star \A_j= \sum_{h=0}^D \overline{p}_{ij}^h \A_h,\qquad h,i,j=0,1,\ldots,D.
\end{equation}
Note that, since the intersection mean-numbers $\overline{p}_{ij}^h$ are Fourier coefficients (see \eqref{commut}), the product $\A_i\star \A_j$ is just the orthogonal projection of $\A_i\A_j$ on $\overline{{\cal D}}$.
Then, we can enunciate the main result of this section.

\begin{theorem}
\label{theo-algebras}
Let $\G$ be a distance mean-regular graph with diameter $D$, adjacency matrix $\A$, and $d+1$ distinct eigenvalues.
Then the following holds.
\begin{itemize}
\item[$(i)$]
The vector space $\overline{{\cal D}}$ is a subalgebra of ${\cal A}$ with the ordinary product of matrices, and
$$
\dim\overline{{\cal D}}=D\le d=\dim{\cal A}.
$$
\item[$(ii)$]
The vector space ${\cal D}$ is a commutative (but not necessarily associative) algebra with the star product `$\star$'.
\item[$(iii)$]
The algebra $\overline{{\cal D}}$ is isomorphic to $\overline{{\cal B}}$ via $\phi:\overline{{\cal D}} \stackrel{\sim}{\longrightarrow} \overline{{\cal B}}$, where
$$
\phi(\overline{\A}_i)=\overline{\B}_i,\quad i=0,1,\ldots,D.
$$
\item[$(iv)$]
If the algebra $({\cal D},\star)$ is associative, then it is faithfully represented by $\overline{{\cal B}}$.
\end{itemize}
\end{theorem}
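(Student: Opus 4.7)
The plan is to treat the four assertions in turn. For part~(i), I would first note that each $\overline{\A}_i=\overline{p}_i(\A)$ lies in $\Re[\A]={\cal A}$, so $\overline{{\cal D}}\subseteq{\cal A}$. Closure under the ordinary matrix product then follows from the three-term recurrence \eqref{recur-dist-mean-matrices}: it gives $\A\cdot\overline{\A}_i\in\overline{{\cal D}}$ for every $i$ (with the convention $\overline{\A}_{D+1}=\00$), so by induction $\A^k\overline{\A}_i\in\overline{{\cal D}}$ for all $k\ge 0$, and then any product $\overline{\A}_j\overline{\A}_i$ expands as a polynomial in $\A$ applied to $\overline{\A}_i$ and stays inside $\overline{{\cal D}}$. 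Linear independence of $\overline{\A}_0,\ldots,\overline{\A}_D$ follows from the polynomials $\overline{p}_i$ having distinct degrees $0,1,\ldots,D$, together with the standard bound $D\le d$ (since $\I,\A,\ldots,\A^D$ are linearly independent in any connected graph of diameter $D$), yielding $\dim\overline{{\cal D}}\le\dim{\cal A}$. Part~(ii) is then immediate from the symmetry $\overline{p}_{ij}^h=\overline{p}_{ji}^h$ established in Proposition~\ref{propo-dist-mat}: the structure constants of the star product are symmetric in $i,j$, whence $\A_i\star\A_j=\A_j\star\A_i$.

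For part~(iii), I would define $\phi:\overline{{\cal D}}\to\overline{{\cal B}}$ on the basis by $\overline{\A}_i\mapsto\overline{\B}_i$ and extend linearly. It is a linear bijection because $\{\overline{\B}_i\}_{i=0}^{D}$ is also a basis of $\overline{{\cal B}}$ (using that $\overline{\B}$ has $D+1$ distinct eigenvalues $\mu_0>\cdots>\mu_D$ and the $\overline{p}_i$ have distinct degrees). The crucial point is that $\phi$ preserves products. Both algebras are generated by a single element, $\A$ respectively $\overline{\B}$, through the same family $\overline{p}_0,\ldots,\overline{p}_D$, so their multiplication tables are governed by the same polynomial identities: if $\overline{p}_i\overline{p}_j$ reduces (modulo the relevant annihilator) to $\sum_h c_{ij}^h\overline{p}_h$, then evaluating at $\A$ gives $\overline{\A}_i\overline{\A}_j=\sum_h c_{ij}^h\overline{\A}_h$ and evaluating at $\overline{\B}$ gives $\overline{\B}_i\overline{\B}_j=\sum_h c_{ij}^h\overline{\B}_h$, whence $\phi(\overline{\A}_i\overline{\A}_j)=\overline{\B}_i\overline{\B}_j=\phi(\overline{\A}_i)\phi(\overline{\A}_j)$. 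Together with $\phi(\I)=\I$, this gives the algebra isomorphism.

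For part~(iv), assuming $({\cal D},\star)$ is associative, the natural candidate for a representation is $\varrho:{\cal D}\to\overline{{\cal B}}$ defined by $\varrho(\A_i)=\overline{\B}_i$; faithfulness is immediate from the linear independence of $\overline{\B}_0,\ldots,\overline{\B}_D$. The essential task is to show $\varrho$ preserves products, which amounts to the identity
$$
\overline{\B}_i\overline{\B}_j=\sum_{h=0}^D\overline{p}_{ij}^h\overline{\B}_h.
$$
This is the main obstacle. A natural approach is to combine the isomorphism of part~(iii), which transports products from $\overline{{\cal D}}$ to $\overline{{\cal B}}$, with the definition of the star product \eqref{mult-dist-matrices} on ${\cal D}$, using the Fourier-coefficient formula $\overline{p}_{ij}^h=\langle\A_i\A_j,\A_h\rangle/\|\A_h\|^2$ from \eqref{commut} to match structure constants on both sides. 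The hypothesis that $({\cal D},\star)$ is associative is then what guarantees that $\varrho$ extends consistently to an algebra homomorphism on all of~${\cal D}$, rather than merely to a linear map respecting the star product on the spanning set of distance matrices.
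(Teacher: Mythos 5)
Your parts (ii) and (iii) are essentially the paper's arguments: (ii) is the symmetry $\overline{p}_{ij}^h=\overline{p}_{ji}^h$ from Proposition \ref{propo-dist-mat}, and (iii) is the paper's remark that $\overline{{\cal D}}$ and $\overline{{\cal B}}$ are images of the same polynomials $\overline{p}_0,\ldots,\overline{p}_D$ evaluated at a single generator. The genuine gap is in (iv), the only place where the associativity hypothesis must do real work. You propose $\varrho(\A_i)=\overline{\B}_i$ and yourself identify multiplicativity, i.e.\ $\overline{\B}_i\overline{\B}_j=\sum_h\overline{p}_{ij}^h\overline{\B}_h$, as ``the main obstacle'' --- but you never prove it. That identity is exactly Proposition \ref{propo-algebra}(i), which the paper establishes \emph{later}, under the hypothesis that the $\overline{\B}_i$ commute, and \emph{by using} Theorem \ref{theo-algebras}(iv); so appealing to it, or to an unspecified matching of Fourier coefficients, is circular, and the sentence ``associativity guarantees that $\varrho$ extends consistently'' is an assertion of precisely what must be shown (the paper's own computation in Proposition \ref{propo-algebra} shows that multiplicativity of $\A_i\mapsto\overline{\B}_i$ amounts to $(\overline{\B}_s\overline{\B}_i)_{rj}=(\overline{\B}_i\overline{\B}_s)_{rj}$, i.e.\ to commutativity of the $\overline{\B}_i$'s, which you never derive from associativity). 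Note also that part (iii) concerns the matrices $\overline{p}_i(\overline{\B})$ of \eqref{mean-pols->intersec-mean-matrices}, whereas \eqref{mult-dist-matrices} involves the proper matrices with entries $\overline{p}_{ij}^h$; the truncated tetrahedron shows these families differ in general, so transporting products through (iii) is not available. The paper's proof of (iv) sidesteps all of this with the left regular representation of the unital algebra $({\cal D},\star)$: by \eqref{mult-dist-matrices}, left star-multiplication $L_{\A_i}$ has matrix $\overline{\B}_i^{\top}$ (entries $\overline{p}_{ij}^h$) in the basis $\A_0,\ldots,\A_D$; associativity makes $x\mapsto L_x$ an algebra homomorphism, and it is injective because $\A_0=\I$ is the star identity (so $L_x(\I)=x$), giving a faithful representation by $\overline{{\cal B}}$ without ever needing the product formula for the $\overline{\B}_i$'s.

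For (i) you also take a different route from the paper, which simply cites \eqref{mean-pols->mean-matrices} and $D\le d$: your closure argument rests entirely on the boundary case $i=D$ of \eqref{recur-dist-mean-matrices} with the convention $\overline{\A}_{D+1}=\00$. Be aware that this case does not follow from the polynomial recurrence \eqref{3-term-pol}: it asserts that the degree-$(D+1)$ polynomial $x\overline{p}_D-\overline{a}_D\overline{p}_D-\overline{b}_{D-1}\overline{p}_{D-1}$ annihilates $\A$, which cannot happen when $D<d$ since the minimal polynomial of $\A$ has degree $d+1$. So the single step that would make $\spaan(\I,\A,\ldots,\A^D)$ closed under multiplication by $\A$ is exactly the step lacking justification (the paper's displayed convention has the same defect, but your induction makes it load-bearing); the remaining points of (i) --- linear independence of the $\overline{\A}_i$ by degree and $D\le d$ --- are fine and agree with the paper.
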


\begin{proof}
$(i)$ is a direct consequence of \eqref{mean-pols->mean-matrices}, \eqref{mult-mean-matrices}, and the well-known fact that $D\le d$.

$(ii)$ It can be easily checked that, the `star product' of two matrices $\X=\sum_i{x_i \A_i}$ and $\Y=\sum_j{y_j \A_j}$ is a linear combination in terms of the basis as:
$$
\X\star \Y=\sum_h\left(\sum_{i,j}x_iy_j \overline{p}_{ij}^h\right)\A_h.
$$
Thus, such an operation is bilinear, and the vector space generated by ${\cal D}$ is closed under it.
Moreover, the dimension of this algebra can not be smaller than $D$, because the distance matrices $\A_i$ are clearly independent.
Finally, the commutative property follows from the equalities $\overline{p}_{ij}^h=\overline{p}_{ji}^h$ for $h,i,j=0,\ldots,D$.

$(iii)$ This a consequence of the fact that, both $\overline{{\cal D}}$ and $\overline{{\cal B}}$, are isomorphic to the algebra $\Re_D[x]$ with basis $\{\overline{p}_0,\overline{p}_1,\ldots,\overline{p}_D\}$.

$(iv)$ The equation \eqref{mult-dist-matrices} indicates that left-multiplication `$\star$' by $\A_i$ can be seen as a linear mapping $\phi_i$ of ${\cal D}$ with respect to the basis $\I,\A,\ldots,\A_D$.  Moreover, $\phi_i$ is fully represented by the matrix $\overline{\B}_i^{\top}$. Then, the result follows since $({\cal D},\star)$ is commutative,
\end{proof}

Notice that $(iv)$ is similar to the result for distance-regular graphs given by Biggs in \cite[Prop. 21.1]{b93}. Some interesting consequences of this result are in the following proposition where the associativity condition on $\overline{\D}$ has been translated into a commutativity requirement in $\overline{\B}$. This is because, from \eqref{mult-dist-matrices}, \eqref{entries-Bi}, and \eqref{commut}, it can be checked that, for any $i,j,k=0,\ldots,D$,
$$
(\A_i\star \A_j)\star \A_k = \A_i\star (\A_j\star \A_k)\quad \iff \quad
\sum_{\ell=0}^D (\overline{\B}_k\overline{\B}_i)_{\ell j}\A_{\ell}
=\sum_{\ell=0}^D (\overline{\B}_i\overline{\B}_k)_{\ell j}\A_{\ell}.
$$
In fact, it can be shown that the commutativity of the $\overline{\B}_i's$ is equivalent to that of the $\A_i's$ (both with respect to the usual product of matrices).

\begin{proposition}
\label{propo-algebra}
Let $\G$ be a distance mean-regular graph with diameter $D$, and proper intersection mean-matrices $\overline{\B}_i$, $i=0,\ldots,D$, that commute with each other. Then, the following holds:
\begin{itemize}
\item[$(i)$]
The matrices $\overline{\B}_i$ satisfy
\begin{equation}\label{BiBj}
\overline{\B}_i\overline{\B}_j=\sum_{h=0}^D \overline{p}_{ij}^h \overline{\B}_h,\qquad h,i,j=0,\ldots,D
\end{equation}
and, in particular,
\begin{equation}\label{BBi}
\overline{\B}\overline{\B}_i= \overline{b}_{i-1}\overline{\B}_{i-1}+ \overline{a}_{i}\overline{\B}_{i}+ \overline{c}_{i+1}\overline{\B}_{i+1}, \qquad i=0,\ldots,D.
\end{equation}
\item[$(ii)$]
The matrix $\overline{\B}_i$ is the distance mean-polynomial of degree $i$ at $\overline{\B}$:
$$
\overline{\B}_i=\overline{p}_i(\overline{\B}),\qquad i=0,\ldots,D.
$$
\item[$(iii)$]
Every parameter $\overline{p}_{ij}^h$ is well determined by the parameters $\overline{a}_{i}$, $\overline{b}_{i}$ and $\overline{c}_{i}$.
\end{itemize}
\end{proposition}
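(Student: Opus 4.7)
My plan for part (i) is to leverage the equivalence, spelled out in the paragraph just above the proposition, between commutativity of the matrices $\overline{\B}_i$ and associativity of the algebra $({\cal D},\star)$. Once associativity is in hand, Theorem \ref{theo-algebras}(iv) provides a faithful representation of $({\cal D},\star)$ by $\overline{\cal B}$ under which $\A_i$ corresponds to (the transpose of) $\overline{\B}_i$. Applying this algebra homomorphism to the defining identity $\A_i\star\A_j=\sum_h\overline{p}_{ij}^h\A_h$ in \eqref{mult-dist-matrices}, together with the symmetry $\overline{p}_{ij}^h=\overline{p}_{ji}^h$ from Proposition \ref{propo-dist-mat}, would yield \eqref{BiBj}. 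Equation \eqref{BBi} is then the special case $j=1$: since $\overline{\B}_1=\overline{\B}$, and $\overline{p}_{1i}^h$ is forced by the tridiagonal structure of $\overline{\B}$ to vanish unless $h\in\{i-1,i,i+1\}$, taking the values $\overline{b}_{i-1}$, $\overline{a}_i$, $\overline{c}_{i+1}$ respectively.

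For (ii), I would induct on $i$ using \eqref{BBi}. The bases $\overline{\B}_0=\I=\overline{p}_0(\overline{\B})$ and $\overline{\B}_1=\overline{\B}=\overline{p}_1(\overline{\B})$ are clear. For the inductive step, solve \eqref{BBi} for $\overline{\B}_{i+1}$, obtaining
$$
\overline{\B}_{i+1}=\overline{c}_{i+1}^{-1}\bigl(\overline{\B}\,\overline{\B}_i-\overline{b}_{i-1}\overline{\B}_{i-1}-\overline{a}_i\overline{\B}_i\bigr),
$$
substitute $\overline{\B}_{i-1}=\overline{p}_{i-1}(\overline{\B})$ and $\overline{\B}_i=\overline{p}_i(\overline{\B})$, and recognise the resulting expression, via the polynomial three-term recurrence \eqref{3-term-pol} evaluated at $x=\overline{\B}$, as $\overline{p}_{i+1}(\overline{\B})$. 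Part (iii) is then an immediate consequence: by (ii) each $\overline{\B}_i$ is a polynomial in $\overline{\B}$ whose coefficients are determined via \eqref{3-term-pol} by the numbers $\overline{a}_j,\overline{b}_j,\overline{c}_j$; and $\overline{\B}$ itself is tridiagonal with precisely those entries, so every matrix entry $(\overline{\B}_i)_{hj}=\overline{p}_{ij}^h$ is determined by the $\overline{a}$'s, $\overline{b}$'s and $\overline{c}$'s.

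The main technical subtlety lies in (i). The faithful representation of Theorem \ref{theo-algebras}(iv) is expressed through $\overline{\B}_i^{\top}$ (representing left $\star$-multiplication), so translating $\A_i\star\A_j=\sum_h\overline{p}_{ij}^h\A_h$ through it produces $\overline{\B}_j^{\top}\overline{\B}_i^{\top}=\sum_h\overline{p}_{ij}^h\overline{\B}_h^{\top}$, whence \eqref{BiBj} appears only after transposing and applying $\overline{p}_{ij}^h=\overline{p}_{ji}^h$. A secondary point arises in the inductive step of (ii): one needs $\overline{c}_{i+1}\neq 0$ for $i<D$ in order to solve for $\overline{\B}_{i+1}$. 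This is guaranteed by Lemma \ref{basic-properties}(ii), which gives $k_i\overline{b}_i=k_{i+1}\overline{c}_{i+1}$, together with the fact that $\overline{b}_i>0$ for $i<D$ (otherwise no vertex at distance $i$ from $u$ would have a neighbor at distance $i+1$, contradicting the assumption that the diameter equals $D$).
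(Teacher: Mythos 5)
Your proposal is correct and takes essentially the same route as the paper: the paper's own proof also rests on the fact that, under the commutativity hypothesis, $\overline{{\cal B}}$ faithfully represents $({\cal D},\star)$, except that it verifies directly, by an entrywise computation using $\overline{p}_{ij}^h=\overline{p}_{ji}^h$ and the commutativity of the $\overline{\B}_i$'s, that $\A_i\mapsto\overline{\B}_i$ is an algebra homomorphism, whereas you reach the same identity \eqref{BiBj} by invoking the associativity equivalence stated before the proposition together with Theorem \ref{theo-algebras}$(iv)$ (and sorting out the transpose convention). Your explicit induction for $(ii)$, including the remark that $\overline{c}_{i+1}>0$ lets one solve \eqref{BBi} for $\overline{\B}_{i+1}$, merely spells out what the paper leaves as a direct consequence of \eqref{mult-dist-matrices} and Subsection \ref{ortho-pol}, and your argument for $(iii)$ matches the paper's first argument.
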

\begin{proof}
All the results are sustained by the fact, under the hypotheses, $\overline{{\cal B}}$ is a faithful representation of $({\cal D},\star)$. Indeed, the us check that the (linear) mapping $\Psi: {\cal D} \rightarrow \overline{{\cal B}}$ defined by $\Psi(\A_i)=\overline{B}_i$, for $i=0,\ldots, D$ is an algebra isomorphism. First, using \eqref{mult-dist-matrices}, \eqref{entries-Bi}, and \eqref{commut},
\begin{align*}
\Psi(\A_i\star \A_j)&=\Psi\left(\sum_{h=0}^D \overline{p}_{ij}^h \A_h\right)
=\sum_{h=0}^D (\overline{\B}_i)_{hj}\overline{\B}_h
\end{align*}
so that, for any $r,s=0,\ldots,D$,
$$
(\Psi(\A_i\star \A_j))_{rs}=\sum_{h=0}^D (\overline{\B}_i)_{hj}(\overline{\B}_h)_{rs}
=\sum_{h=0}^D (\overline{\B}_i)_{hj}(\overline{\B}_s)_{rh}=(\overline{\B}_s\overline{\B}_i)_{rj}.
$$
Moreover,
$$
(\Psi(\A_i)\Psi(\A_j))_{rs}=(\overline{\B}_i\overline{\B}_j)_{rs}
=\sum_{h=0}^D (\overline{\B}_i)_{rh}(\overline{\B}_j)_{hs}
=\sum_{h=0}^D (\overline{\B}_i)_{rh}(\overline{\B}_s)_{hj}=(\overline{\B}_i\overline{\B}_s)_{rj}.
$$
Thus, $\Psi(\A_i\star \A_j)=\Psi(\A_i)\Psi(\A_j)$, as claimed.

Taking the above into mind, $(i)$ and $(ii)$ are direct consequences of
\eqref{mult-dist-matrices} and the results of Subsection \ref{ortho-pol}.

$(iii)$ By the same results, the parameters $\overline{a}_{i}$, $\overline{b}_{i}$ and $\overline{c}_{i}$ determine the distance
mean-polynomials $\overline{p}_0,\overline{p}_1,\ldots,\overline{p}_D$ which, according to $(ii)$, give the intersection mean-matrices $\overline{\B}_i$ with entries $\overline{p}_{ij}^h$.
Alternatively, by \eqref{BiBj} and $(ii)$, we have
$$
\overline{p}_i\overline{p}_j=\sum_{h=0}^D \overline{p}_{ij}^h \overline{p}_h,\qquad h,i,j=0,\ldots,D.
$$
Thus, $\overline{p}_{ij}^h$ is just the Fourier coefficient of $\overline{p}_i\overline{p}_j$, with respect to the scalar product in \eqref{star-product}, in terms of the basis $\{\overline{p}_h:h=0,\ldots,D\}$:
$$
\overline{p}_{ij}^h=\frac{\langle\overline{p}_i\overline{p}_j,
\overline{p}_h\rangle_{\star}}{\|\overline{p}_h\|_{\star}^2}.
$$
This completes the proof.
\end{proof}

Notice that the equalities $(\overline{\B}_i\overline{\B}_j)_{rs}=(\overline{\B}_j\overline{\B}_i)_{rs}$ for every $i,j,r,s$, which can be written as
$$
\sum_{h=0}^D \overline{p}_{sh}^r\overline{p}_{ij}^h
=\sum_{h=0}^D \overline{p}_{ih}^r\overline{p}_{sj}^h,
$$
are like the ones satisfied by the parameters $p_{ij}^h$ of an association scheme with $D=d$ classes (or a distance-regular graph with diameter $D$);
see e.g. Brouwer, Cohen and Neumaier \cite[Lemma 2.1.1(vi)]{bcn89}.

\subsection{The truncated tetrahedron}
\label{trunc-tetra}
We end this section with an example showing that the conditions of Theorem \ref{theo-algebras} (or Proposition \ref{propo-algebra}) not always hold.
The {\em truncated tetrahedron} $\G= K_4[\triangle]$ shown in Fig. \ref{trunc-tetra}, is a vertex-transitive (and Cayley) graph with diameter $D=3$, and spectrum
$$
\spec \G=\{3, 2^3,  0^2, -1^3,-2^3\}.
$$

\begin{figure}[t]
\begin{center}
 \includegraphics[scale=0.6]{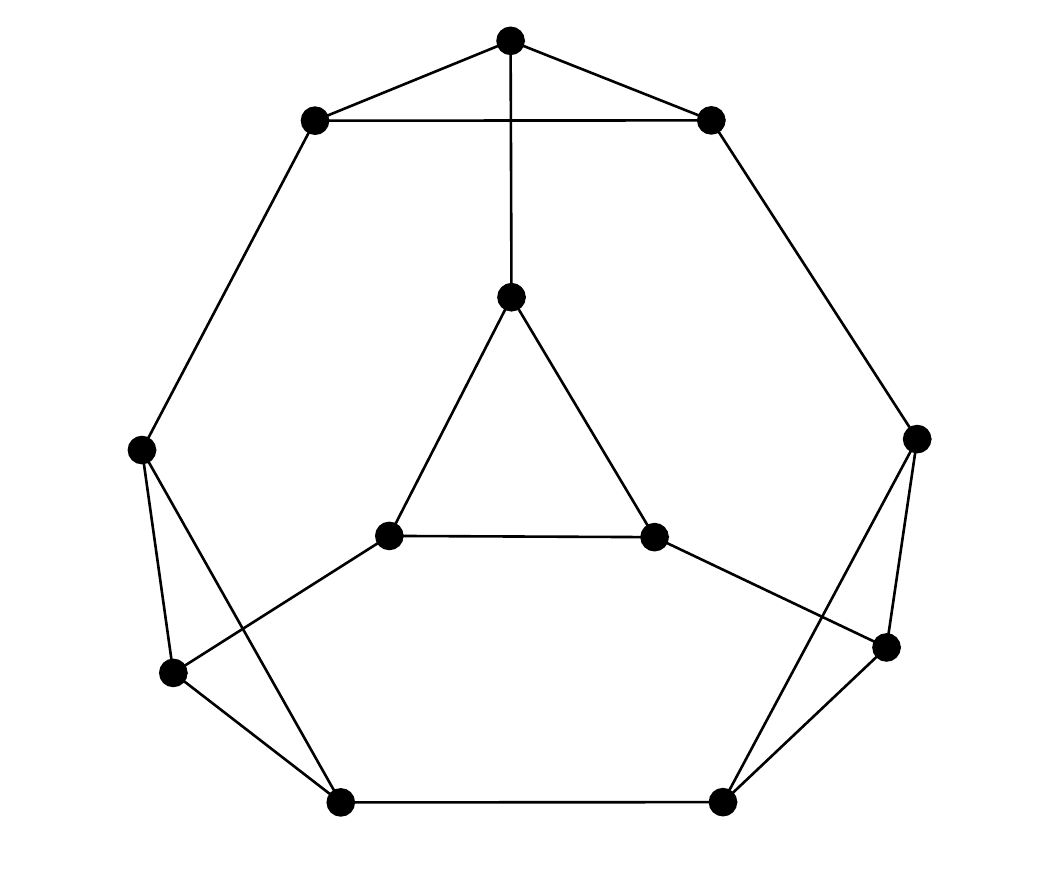}
\end{center}
\caption{The truncated tetrahedron}
\label{trunc-tetra}
\end{figure}
Thus, it is a distance mean-regular graph with proper intersection mean-matrices
$$
\overline{\B}=\left(\begin{array}{cccc}
  0&3&0&0\\
  1&\frac{2}{3}&\frac{4}{3}&0\\
  0&1&\frac{1}{2}&\frac{3}{2}\\
0&0&\frac{3}{2}&\frac{3}{2}\\
\end{array}\right),\quad \overline{\B}_2=
\left(\begin{array}{cccc}
  0&0&4&0\\
  0&\frac{4}{3}&\frac{2}{3}&2\\
  1&\frac{1}{2}&1&\frac{3}{2}\\
  0&\frac{3}{2}&\frac{3}{2}&1\\
\end{array}\right), \quad \mbox{and}\quad
\overline{\B}_3=
\left(\begin{array}{cccc}
  0&0&0&4\\
  0&0&2&2\\
  0&\frac{3}{2}&\frac{3}{2}&1\\
1&\frac{3}{2}&1&\frac{1}{2}\\
\end{array}\right).
$$
Note that, since $\overline{b}_1=\frac{4}{3}<\frac{3}{2}=\overline{b}_2$, the monotonic property $\overline{b}_0\ge \overline{b}_1\ge \overline{b}_2$ does not hold.

However, as these matrices do not commute, they are not all equal to the intersection mean-matrices obtained from the distance mean-polynomials \eqref{mean-pols->intersec-mean-matrices}, which turn out to be:
$$
\overline{p}_1(\overline{\B})=\overline{\B},\quad
\overline{p}_2(\overline{\B})=
\left(\begin{array}{cccc}
  0&0&4&0\\
  0&\frac{4}{3}&\frac{2}{3}&2\\
  1&\frac{1}{2}&\frac{1}{2}&2\\
  0&\frac{3}{2}&2&\frac{1}{2}\\
\end{array}\right), \quad \mbox{and}\quad
\overline{p}_3(\overline{\B})=
\left(\begin{array}{cccc}
  0&0&0&4\\
  0&0&2&2\\
  0&\frac{3}{2}&2&\frac{1}{2}\\
1&\frac{3}{2}&\frac{1}{2}&1\\
\end{array}\right).
$$

\noindent{\large \bf Acknowledgments.} The author acknowledges the useful comments and suggestions of A.E. Brouwer.
This research is supported by the
{\em Ministerio de Econom\'{\i}a y Competividad} (Spain) and the {\em European Regional Development Fund} under project MTM2011-28800-C02-01, and the {\em Catalan Research
Council} under project \linebreak 2009SGR1387.


\begin{thebibliography}{99}
\bibitem{bi93}
E. Bannai and T. Ito, {\em Algebraic Combinatorics I: Association Schemes}, Benjamin Cummings Lecture Notes Series, Vol. {\bf 58}, Benjammin-Cummings, London, 1993.

\bibitem{b15}
A.E. Brouwer, Personal communication, 2015.

\bibitem{bcn89}
A.E. Brouwer, A.M. Cohen, and A. Neumaier, \emph{Distance-Regular Graphs},
Springer-Verlag, Berlin-New York, 1989.

\bibitem{bh12}
A.E. Brouwer and W.H. Haemers, \emph{Spectra of Graphs},
Springer, 2012; available online at \url{http://homepages.cwi.nl/~aeb/math/ipm/}.

\bibitem{b71}
N. Biggs, Intersection matrices for linear graphs, in: D.J.A. Welsh (Ed.), {\em Combinatorial Mathematics and its Applications}, Academic Press, London, 1971, pp. 15--23.

\bibitem{b93}
N. Biggs, \emph{Algebraic Graph Theory}, Cambridge University Press,
Cambridge, 1974, second edition, 1993.

\bibitem{cffg09}
M. C\'amara, J. F\`abrega, M.A. Fiol, and E. Garriga, Some families of orthogonal polynomials of a discrete variable and their applications to graphs ans codes,
{\em Electron. J. Combin.} {\bf 16} (2009),  $\#R83$.

\bibitem{dkt12}
E.R. van Dam, J.H. Koolen, and H. Tanaka, Distance-regular
graphs, manuscript (2014), available online at
\url{https://sites.google.com/site/edwinrvandam/home/papers/drg.pdf}.

\bibitem{fg97}
M.A. Fiol and E. Garriga,
From local adjacency polynomials to locally pseudo-distance-regular graphs, \emph{J. Combin. Theory Ser. B} {\bf  71} (1997) 162--183.

\bibitem{f99}
M.A. Fiol, Eigenvalue interlacing and weight parameters of graphs, \emph{Linear Algebra Appl.} {\bf 290} (1999), 275--301.

\bibitem{f12}
M.A. Fiol, Pseudo-distance-regularized graphs are distance-regular
or distance-biregular, {\it Linear Algebra  Appl.} 437 (2012) 2973--2977.

\bibitem{g93}
C.D. Godsil, {\it Algebraic Combinatorics}, Chapman and Hall, New
York, 1993.

\bibitem{gs87}
C.D. Godsil and J. Shawe-Taylor, Distance-regularised graphs
are distance-regular or distance-biregular, {\em J. Combin. Theory Ser. B}
{\bf 43} (1987), no. 1, 14–-24.

\bibitem{h95}
W.H. Haemers, Interlacing eigenvalues and graphs,  \emph{Linear Algebra Appl.} {\bf 226-228} (1995) 593--616.

\bibitem{hggk04}
M. Hazewinkel, N. Gubareni, N.M. Gubareni, V.V. Kirichenko, {\em Algebras, Rings and Modules},
Volume 1, Springer, 
Heildelber 2004. 

\bibitem{hn84}
T. Hilano and K. Nomura, Distance degree regular graphs, {\em J. Combin. Theory Ser. B} {\bf 37} (1984), 96--100.

\bibitem{w82}
P. Weichsel, On distance-regularity in graphs, {\em J. Combin. Theory, Ser. B} {\bf 32} (1982) 156--161.


\end{thebibliography}
\end{document}